\newtheorem{definition}{Definition}[section]
\newtheorem{remark}[definition]{Remark}
\newtheorem{corollary}[definition]{Corollary}
\newtheorem{lemma}[definition]{Lemma}
\newtheorem{proposition}[definition]{Proposition}
\newtheorem{fact}[definition]{Fact}
\newtheorem{question}[definition]{Question}
\def\Ind#1#2{#1\setbox0=\hbox{$#1x$}\kern\wd0\hbox to 0pt{\hss$#1\mid$\hss}
\lower.9\ht0\hbox to 0pt{\hss$#1\smile$\hss}\kern\wd0}
\def\ind{\mathop{\mathpalette\Ind{}}}
\def\Notind#1#2{#1\setbox0=\hbox{$#1x$}\kern\wd0\hbox to 0pt{\mathchardef
\nn=12854\hss$#1\nn$\kern1.4\wd0\hss}\hbox to 
0pt{\hss$#1\mid$\hss}\lower.9\ht0
\hbox to 0pt{\hss$#1\smile$\hss}\kern\wd0}
\def\nind{\mathop{\mathpalette\Notind{}}}
\title{Rank axioms and supersimplicity}
\author{Santiago C\'ardenas-Mart\'in, Rafel Farr\'e}
\date{Version 2022-02-05}
\begin{document} 

\maketitle
\thispagestyle{empty}

\bigskip
\begin{abstract}
    Just as Lascar's notion of abstract rank axiomatizes the $U$ rank, we propose axioms for the ranks $SU^d$ and $SU^f$, the foundation ranks of dividing and forking. 
    We study the relationships between these axioms. 
    As with superstable, we characterize supersimple types and theories based on the existence of these ranks. 
    We show that the $U$ rank is the foundation rank of the Lascar-splitting independence relationship. 
    We also provide an alternative definition of $SU^d$ similar to the original definition of $U$.
    Finally, we check that if in the standard characterizations of simple and supersimple we change the non-forking independence for the non-lascar-splitting independence, we characterize stable and superstable.
\end{abstract}

\section{Conventions}
    We denote by $L$ a language and $T$ a complete theory. 
    We denote by $\mathfrak{C}$ a monster model of $T$ and assume that it is $\kappa$-saturated and strongly
      $\kappa$-homogeneous for a cardinal $\kappa$ larger enough.
    Every set of parameters $A,B,\ldots$ is considered as a subset of $\mathfrak{C}$ with cardinal less than
      $\kappa$.

    We denote $a,b,\ldots$ tuples of elements of the monster model, possibly infinite (of length less than $\kappa$).
    We often use these tuples as ordinary sets regardless of their order. 
    We often omit union symbols, for example we write $ABc$ to mean $A\cup B\cup c$.
	Given a sequence of sets $(A_i:i\in\alpha )$ we use $A_{<i}$ and $A_{\leq i}$  to denote $\bigcup_{j<i}A_j$
	  and $\bigcup_{j\leq i}A_j$ respectively.
	We use $I$ to denote a infinite index set without order and use $O$ for a infinite linearly ordered set.
	Unless otherwise stated, all the complete types are finitary.
	We use $\ind^d$ and $\ind^f$ to denote the independence relations for non-dividing and non-forking respectively.
	By $dom(p)$ we denote the set of all parameters that appear in any formula of $p$.

\section{Rank axioms over complete types}
	In this paper we use the word \textbf{`rank'} for mappings assigning to some complete types an ordinal or the symbol $\infty$ that have the properties of \textit{Heredity} and \textit{Isomorphism} defined below.
	It is a known fact that the existence of certain kind of ranks over complete types called `abstract rank' or `rank in the sense of Lascar' characterize superstable theories (see for example Poizat\cite{Poizat}).
	We will prove that some weakening of the properties of abstract rank are going to characterize supersimple
	  theories and supersimple types (see C\'ardenas, Farr\'e\cite{CardenasFarre2} for the definition of supersimple
	  type in any theory).
	In order to obtain results for types, we are going to localize these ranks to certain classes of complete types. 

	\begin{definition}
		In this section, a \textbf{class of complete types} will be a class $\mathfrak{A}$ of complete types
		  satisfying the next two properties:		
		\begin{enumerate}
			\item If $p(x)\in\mathfrak{A}$ and $q(x)\supseteq p(x)$ is another complete type, 
			  then $q(x)\in\mathfrak{A}$.
			\item If $p(x)\in\mathfrak{A}$ and $f$ is an automorphism of the monster model, the conjugate $p^f$ of
			  $p$ by $f$ also belongs to $\mathfrak{A}$.
		\end{enumerate}
		
		For any partial type $p$, we denote by $\left<p\right>$ the minimum class  containing all completions of $p$.
	\end{definition}

	\begin{definition}
		Let $\mathfrak{A}$ be a class of complete types. 
		Let $R$ be a mapping $R:\mathfrak{A}\to Ord\cup\{\infty\}$.
		We say that $R$ has \textbf{Heredity} if for every $p,q\in\mathfrak{A}$ and $p(x)\subseteq q(x)$, 
		  it satisfies $R(p)\geq R(q)$.
		We say that $R$ has \textbf{Isomorphism} if for every automorphism $f$ over the monster model and every
		  $p\in\mathfrak{A}$, it holds $R(p)=R(p^f)$.
		  
		A \textbf{rank} is a mapping $R:\mathfrak{A}\to Ord\cup\{\infty\}$  satisfying these two properties. 
		Moreover, when $R(p)<\infty$ we say that \textbf{$p$ is ranked by $R$}.
	\end{definition}
	
    From now on, every rank considered here is always defined on a certain class of types.
	The following definition is a straightforward adaptation of the classical notion of \textit{abstract rank} to
	  classes of complete types. 

	\begin{definition}
		Let $\mathfrak{A}$ be a class of complete types. 
		An \textbf{abstract rank} is a rank $R:\mathfrak{A}\to Ord\cup\{\infty\}$ satisfying the following properties:
		\begin{enumerate}
			\item \textbf{(Bounded multiplicity)} If $p\in\mathfrak{A}$, $p(x)\in S(A)$ ranked, then there
			  is a cardinal $\kappa$ such that for any $B\supseteq A$, $p(x)$ has at most $\kappa$ different
			  extensions $q(x)\in S(B)$ such that $R(p)=R(q)$.
			\item If $p\in\mathfrak{A}$, $p(x)\in S(A)$ and $A\subseteq B$, then there is some extension $q(x)\in S(B)$ of $p$ such that $R(p)=R(q)$.
		\end{enumerate}
	\end{definition}

    The second condition is called traditionally Extension (see Poizat\cite{Poizat}).
    But in this paper we are going to call Extension a weakening of it:
	\begin{definition}
		Let $R$ be a rank over $\mathfrak{A}$. 
        We say that a rank $R$ has the property of \textbf{Extension} if for every ranked $p(x)\in S(A)$ and $A\subseteq B$, 
          there is some extension $q(x)\in S(B)$ of $p$ such that $R(p)=R(q)$.
	\end{definition}

    We remember the definition of the Lascar rank $U$:
	\begin{definition}
		The \textbf{$U$-rank} for a complete type $p(x)\in S(A)$ is defined as follows:
		\begin{enumerate}
			\item $U(p)\geq 0$.
			\item $U(p)\geq\alpha+1$ if and only if for each cardinal number $\kappa$ there is a set $B\supseteq A$
			  and there are at least $\kappa$ many types $q(x)\in S(B)$ extending $p$ such that
			  $U(q)\geq\alpha$.
			\item For $\alpha$ a limit ordinal, $U(p)\geq\alpha$ if and only if $U(p)\geq\beta$ for all $\beta<\alpha$.
		\end{enumerate}
		$U(p)$ is the supremum of all $\alpha$ such that $U(p)\geq\alpha$.
		If such supremum does not exist we set $U(p)=\infty$.\par		
	\end{definition}
	
	It is a well-known fact that $U$ is the lowest abstract rank (see for example Poizat\cite{Poizat}). 
	However, we only need that a rank has \textit{Bounded multiplicity}:
	\begin{lemma}\label{UleqBM}
		If $R$ is a rank with \textit{Bounded multiplicity} on a class of types $\mathfrak{A}$, then $U(p)\leq R(p)$
		  for every $p\in\mathfrak{A}$.
	\end{lemma}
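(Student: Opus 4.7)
The plan is to prove by transfinite induction on the ordinal $\alpha$ the auxiliary statement:
\[
\text{for every } p\in\mathfrak{A},\ \text{if } U(p)\geq\alpha \text{ then } R(p)\geq\alpha.
\]
The lemma then follows by taking $\alpha=U(p)$ (with the convention that $R(p)\geq\infty$ means $R(p)=\infty$, handled by letting $\alpha$ range through all ordinals).

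The base case $\alpha=0$ is trivial, and the limit case is immediate from the inductive hypothesis together with clause~(3) of the definition of the $U$-rank. All the content is in the successor step. Assume $U(p)\geq\beta+1$ with $p\in S(A)\cap\mathfrak{A}$. By the definition of $U$, for every cardinal $\kappa$ there exist $B\supseteq A$ and at least $\kappa$ many extensions $q\in S(B)$ of $p$ with $U(q)\geq\beta$. Such $q$ lie in $\mathfrak{A}$ because the class is closed under passage to complete extensions. By the inductive hypothesis, each of these $q$ satisfies $R(q)\geq\beta$, and by \emph{Heredity} we have $R(q)\leq R(p)$, so in particular $R(p)\geq\beta$.

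To upgrade this to $R(p)\geq\beta+1$, we argue by contradiction. If $R(p)=\infty$ there is nothing to prove, so suppose $p$ is ranked and $R(p)=\beta$. Then \emph{Bounded multiplicity} provides a cardinal $\kappa_0$ bounding, uniformly in $B\supseteq A$, the number of extensions $q\in S(B)$ of $p$ with $R(q)=R(p)$. Apply the clause from the $U$-rank definition with $\kappa=\kappa_0^{+}$: over some $B$ we obtain more than $\kappa_0$ extensions $q$ with $U(q)\geq\beta$, hence with $\beta\leq R(q)\leq R(p)=\beta$, i.e.\ $R(q)=R(p)$. This contradicts the choice of $\kappa_0$. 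Therefore $R(p)>\beta$, that is, $R(p)\geq\beta+1$, closing the induction.

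The only delicate point is the successor step: one must combine \emph{Heredity} (to push the inductive bound on extensions upward to $p$) with \emph{Bounded multiplicity} (to force the strict inequality), and this in turn relies on the class $\mathfrak{A}$ being closed under completing extensions so that the inductive hypothesis applies to the many witnesses supplied by the $U$-rank clause. No use of \emph{Isomorphism} or \emph{Extension} is needed.
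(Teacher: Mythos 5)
Your proof is correct and follows essentially the same route as the paper's: transfinite induction on $\alpha$, with the successor step combining the inductive hypothesis, \emph{Heredity}, and \emph{Bounded multiplicity} applied with a cardinal exceeding the multiplicity bound. You merely spell out details the paper leaves implicit (the contradiction via $\kappa_0^{+}$ and the closure of $\mathfrak{A}$ under complete extensions), so no substantive difference.
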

	\begin{proof}
		We  prove by induction that for any $p\in\mathfrak{A}$, $U(p)\geq\alpha$ implies $R(p)\geq\alpha$.
		The `zero' and `limit' cases being obvious, we assume that is true for $\alpha$ and prove it is true for $\alpha+1$. If $U(p)\geq\alpha+1$, then for every cardinal $\lambda$ there exists $B\supseteq A$ and $(p_i:i\in\lambda)$ extensions of $p$ to $B$ with
		  $U(p_i)\geq\alpha$ for every $i\in\lambda$. By the induction hypothesis, $R(p_i)\geq\alpha$ for every $i\in\lambda$ and by \textit{Bounded multiplicity}, it is not possible that all $p_i$ have rank $\alpha$.
		So by \textit{Heredity}, $R(p)\geq\alpha+1$.
	\end{proof}

    We also remember the notion of Lascar splitting and the independence $\ind^i$ (see for instance
      Adler\cite{Adler} and Casanovas\cite{Casanovas10}):
    \begin{definition}
        The group $Autf(\mathfrak{C}/A)$ of \textbf{strong automorphisms over $A$} is the subgroup of
          $Aut(\mathfrak{C}/A)$ generated by the automorphisms fixing some model containing $A$.
        Two tuples $a,b$ \textbf{have same Lascar Strong type over $A$}, written $a\stackrel{Ls}{\equiv}_A b$,
          if they are in the same orbit under $Autf(\mathfrak{C}/A)$.
        
        Let $A\subseteq B$, and let $p(x)\in S(B)$.
        We say that \textbf{$p$ Lascar-splits over $A$} if for some formula $\varphi(x,y)\in L$ there are tuples
          $a, b\in B$ such that $a\stackrel{Ls}{\equiv}_A b$, $\varphi(x,a)\in p$ and $\neg\varphi(x,b)\in p$.
          
        We write $A\ind^i_C B$ if and only if for each tuple $a\in A$ there is a global extension of $tp(a/BC)$ that
          does not Lascar-split over $C$.
    \end{definition}
    We are going to use freely the notions of stable type, simple type and NIP type. 
    For the definitions see Poizat\cite{Poizat} for stable types; Hart, Kim, Pillay\cite{HKP} for simple types; Chernikov\cite{Chernikov} for NIP types, and a for a summary of the definitions you can see Casanovas\cite{Casanovas11b}.
    One must bear in mind that a type is stable if and only if it is simple and NIP.  
    We will use the following known facts about Lascar-splitting (see Adler\cite{Adler}, Casanovas\cite{Casanovas10} and Casanovas\cite{Casanovas11b}):
    \begin{fact} \label{LSFacts} \
        \begin{enumerate}
            \item If $p(x)$ is a partial type over $A$, then there is a bounded number of global types extending $p$
              that do not Lascar-split over $A$. In fact the number is $\leq2^{2^{|T|+|A|}}$.
            \item For global types, non Lascar-splitting implies non forking.
              That is, $a\ind^i_AB$ implies $a\ind^f_AB$.
            \item If $p(x)\in S(B)$ is NIP and it does not fork over $A\subseteq B$, then $p(x)$ does not
                Lascar-split over $A$.
              So, for $a\models p(x)$ with $p(x)\in S(B)$ NIP, $a\ind^i_AB$ if and only if $a\ind^f_AB$.
        \end{enumerate}
    \end{fact}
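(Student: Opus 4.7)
For item 1, a global type $q$ that does not Lascar-split over $A$ is completely determined by the function sending each pair $(\varphi(x,y),s)$ of a formula and a Lascar strong type $s\in S^{Ls}(A)$ to the truth value of ``$\varphi(x,b)\in q$ for some (equivalently any) $b\models s$.'' Since Lascar equivalence is the finest bounded $A$-invariant equivalence relation, the number of Lascar strong types over $A$ is at most $2^{|T|+|A|}$; combined with $|T|+|A|$ many formulas this bounds the number of such global types by $2^{(|T|+|A|)\cdot 2^{|T|+|A|}}=2^{2^{|T|+|A|}}$.

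For item 2, let $q$ be a global Lascar-non-splitting type over $A$ with $\varphi(x,c)\in q$. Given an $A$-indiscernible sequence $(c_i)_{i<\omega}$ starting at $c_0=c$, the $c_i$ all share the same Lascar strong type over $A$, so non Lascar-splitting forces $\varphi(x,c_i)\in q$ for every $i$, and any realization of $q$ witnesses consistency of $\{\varphi(x,c_i):i<\omega\}$. Hence $\varphi(x,c)$ does not divide over $A$, and since $q$ is global this coincides with non-forking.

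Item 3 is where I would spend the most effort, as it is the one truly requiring NIP. Extend $p$ to a global non-forking extension $\tilde q$, and assume toward contradiction that $\tilde q$ Lascar-splits over $A$: there exist $a\stackrel{Ls}{\equiv}_A b$ and $\varphi$ with $\varphi(x,a)\in\tilde q$ and $\neg\varphi(x,b)\in\tilde q$. Using the presentation of Lascar equivalence as generated by equality of types over some model containing $A$, the problem reduces to showing that $\tilde q$ is $M$-invariant for every model $M\supseteq A$. If $\tilde q$ failed to be $M$-invariant, a combination of Morley-sequence extraction and Ramsey would yield an $M$-indiscernible sequence against which $\varphi(x,y)$ alternates when tested by a realization of $\tilde q$, contradicting NIP of $p$. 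The main technical hurdle is making non-forking of $\tilde q$ over $A$ propagate to enough invariance over an arbitrary $M\supseteq A$ to run this construction; this is precisely the standard NIP result that global non-forking extensions over a set are Lascar-invariant over it, and once that is in place the Lascar-chain reduction closes the argument.
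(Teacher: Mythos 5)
The paper does not prove this statement at all: it is stated as a \textit{Fact} with citations to Adler and Casanovas, so your attempt has to be measured against the standard arguments from those sources. Your items \emph{1} and \emph{2} are correct and are essentially those standard proofs. For \emph{1}, a non-Lascar-splitting global type is indeed determined by the induced truth function on pairs (formula, Lascar strong type over $A$); the only loose point is that ``finest bounded $A$-invariant equivalence relation'' by itself gives boundedness, not the specific bound $2^{|T|+|A|}$ on Lascar strong types --- that bound comes from comparing with equality of types over a model $M\supseteq A$ of size $|T|+|A|$, since $a\equiv_M b$ implies $a\stackrel{Ls}{\equiv}_A b$. For \emph{2}, your argument (members of an $A$-indiscernible sequence share a Lascar strong type over $A$, so $q$ contains all the conjugates $\varphi(x,c_i)$ and witnesses their consistency, and forking equals dividing for complete global types) is exactly right.

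Item \emph{3} has a genuine gap, on two counts. First, it is circular: what you call the ``main technical hurdle'' is, in your own words, ``precisely the standard NIP result that global non-forking extensions over a set are Lascar-invariant over it'' --- but that \emph{is} the statement of item \emph{3}, so you have cited the result rather than proved it. Second, the reduction you chose points the wrong way: you propose to show that $\tilde q$ does not split over each model $M\supseteq A$ and to run the NIP alternation argument against $M$-indiscernible sequences, but extracting the alternating realization requires $\tilde q$ not to divide over $M$, and non-forking over $A$ does not transfer to non-forking over a larger base $M$ in a general theory (base monotonicity is not available here); indeed, invariance over every $M\supseteq A$ is something one \emph{deduces from} Lascar-invariance over $A$, not a stepping stone towards it. The standard proof keeps the base equal to $A$ throughout by using the other presentation of Lascar equivalence: it is the transitive closure of ``both tuples lie on a common $A$-indiscernible sequence.'' For a single such step, suppose $b,b'$ lie on an $A$-indiscernible sequence $(b_i)_{i<\omega}$ with $b=b_0$, $b'=b_1$, and $\varphi(x,b_0)\wedge\neg\varphi(x,b_1)\in\tilde q$. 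The sequence of pairs $(b_{2i}b_{2i+1})_{i<\omega}$ is $A$-indiscernible and starts at $(b_0b_1)$, so non-dividing of $\tilde q$ over $A$ yields a realization $a$ of $\{\varphi(x,b_{2i})\wedge\neg\varphi(x,b_{2i+1}) : i<\omega\}$ (together with the relevant restriction of $\tilde q$), and then $\varphi(a,b_i)$ alternates infinitely along the $A$-indiscernible sequence $(b_i)$, contradicting NIP. Chaining these distance-one steps gives Lascar-invariance of $\tilde q$ over $A$, and restricting $\tilde q$ to $B$ gives the statement about $p$; the final equivalence $a\ind^i_AB$ iff $a\ind^f_AB$ then follows as you intended, using item \emph{2} for one direction and the existence of a global non-forking extension for the other.
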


	Now, we are going to define other new rank notions:
	\begin{definition} 
	    Let $R$ a rank on a class of complete types $\mathfrak A$.
		\begin{enumerate}
			\item $R$ is a \textbf{dividing rank} if for every $p(x),q(x)\in \mathfrak A$ with $p$ ranked and 
			  $q$ a dividing extension of $p$, then $R(p)>R(q)$.
			\item $R$ is a \textbf{forking rank} if for every $p(x),q(x)\in \mathfrak A$ with $p$ ranked and 
			  $q$ a forking extension of $p$, then $R(p)>R(q)$.
			\item $R$ is a \textbf{Lascar-splitting rank} if for every $p(x),q(x)\in \mathfrak A$ with $p$ ranked and 
			  $q$ a extension of $p$ such that some global extension of $q$ lascar-splits over the domain of $p$, then $R(p)>R(q)$.
		\end{enumerate}
	\end{definition}

    \begin{definition} 
        We define $SU^d$, $SU^f$ and $SU^i$ as the foundation ranks of the relations $R_d$, $R_f$ and $R_i$
          (respectively) defined below.
        Let $p(x)\in S(A)$, $q(x)\in S(B)$ complete types.  
        \begin{enumerate}
            \item $pR_dq$ if and only if $q$ is a dividing extension of $p$.
                Or, equivalently,  $A\subseteq B$ and for every $a\models q$, $a\nind^d_AB$. 
            \item $pR_fq$ if and only if $q$ is a forking extension of $p$. 
                Or, equivalently, $A\subseteq B$ and for every $a\models q$, $a\nind^f_AB$. 
            \item $pR_iq$ if and only if  $A\subseteq B$ and some global extension of $q$ Lascar-splits over $A$.
                Or, equivalently, $A\subseteq B$ and for every $a\models q$, $a\nind^i_AB$. 
        \end{enumerate}
    \end{definition}
    Later, we will check that really $SU^i=U$, so the notation $SU^i$ is provisional.

	\begin{remark}
		The following are immediate:
		\begin{enumerate}
			\item $SU^d$ is a dividing rank on any class of complete types and, in fact, is the lowest dividing rank.
			\item $SU^f$ is a forking rank on any class of complete types and, in fact, is the lowest forking rank.
			\item $SU^i$ is a Lascar-splitting rank on any class of complete types and, in fact, is the lowest
			  Lascar-splitting rank.
		    \item For any $p$, $SU^d(p)\leq SU^f(p)\leq SU^i(p)$.
			\item Any Lascar-splitting rank is a forking rank.
			\item Any forking rank is a dividing rank.
		\end{enumerate}
	\end{remark}
    \begin{proof} To prove \emph{1},
        let $R$ be a dividing rank on $\mathfrak{A}$. 
        An easy induction shows that $SU^d(p)\geq\alpha$ implies $R(p)\geq\alpha$. \emph{2} and \emph{3} are
          analogous.
    \end{proof}

    \begin{remark}
        Let $R$ be a rank on a class of types $\mathfrak{A}$.
        It is clear that a restriction of $R$ to a subclass of $\mathfrak{A}$ (closed under extension and
          conjugation) satisfies the same properties (any of the previously defined: Heredity, Isomorphism, 
          Extension, Dividing rank, Forking rank, Lascar-splitting rank, Bounded multiplicity and the defined later: 
          Weak bounded multiplicty in \ref{WBM}, Strong bounded multiplicity in \ref{SBM}) as $R$.
        It is also clear that one can extend $R$ to the class of all complete types, assigning $\infty$ to any type
          $p$ not belonging to $\mathfrak{A}$. 
        This extension satisfies all the same properties as $R$.
    \end{remark}

    In what follows, we show a characterization of dividing rank.
    \begin{definition}\label{WBM}	
        We say that a rank $R$ on a class of complete types $\mathfrak{A}$ has the property of
          \textbf{Weak bounded multiplicity} if for any ranked $p\in\mathfrak{A}$,  
          any $k\geq 2$ and any cardinal $\lambda$, there is some $\kappa$ such that any $k$-inconsistent (the union of any $k$
          types is inconsistent) collection of complete types $q$ extending $p$ with $|dom(q)|\leq\lambda$, all with the same rank as $p$, has cardinal at most $\kappa$.
    \end{definition}
   
   
    \begin{lemma}
       Let $p(x)$ be a partial type over $A$, $B$ a set and $k\ge2$. Then $p$ $k$-divides over $B$ iff $p$ has a collection of $\beth_{k-1}^+(|A|+|L|+\aleph_0)$ many $k$-inconsistent $B$-conjugates.
    \end{lemma}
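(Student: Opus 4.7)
The proof splits into two directions; set $\lambda=|A|+|L|+\aleph_0$. For the forward direction, suppose $p$ $k$-divides over $B$ and fix a $B$-indiscernible sequence $(\bar a_i)_{i<\omega}$ of $B$-conjugates of an enumeration $\bar a$ of $A$ with $\bigcup_{i<\omega}p(x,\bar a_i)$ being $k$-inconsistent. Using the saturation of $\mathfrak{C}$, I would extend this to an indiscernible sequence of length $\beth_{k-1}^+(\lambda)$. Indiscernibility preserves $k$-inconsistency of the union and every $p(x,\bar a_i)$ is a $B$-conjugate of $p$, so the enlarged family gives the required collection.

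The converse is the substantive direction; the plan is to reduce to a single formula of $p$ via Erd\H{o}s--Rado, then invoke the classical extraction of a $B$-indiscernible sequence. Start with $(\bar a_i)_{i<\kappa}$, $\kappa=\beth_{k-1}^+(\lambda)$, a family of $B$-conjugates of $\bar a$ such that $p(x,\bar a_{i_1})\cup\cdots\cup p(x,\bar a_{i_k})$ is inconsistent for every $k$-subset $\{i_1<\cdots<i_k\}$. By compactness each such inconsistency is already witnessed by a finite subtype $q_{i_1,\ldots,i_k}\subseteq p$. Since $|p|\leq\lambda$, the set of finite subsets of $p$ has cardinality at most $\lambda^{<\omega}=\lambda$, so assigning to each $k$-subset its witnessing finite subtype yields a coloring $c\colon[\kappa]^k\to\lambda$.

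The Erd\H{o}s--Rado partition relation $\beth_{k-1}(\lambda)^+\to(\lambda^+)^k_\lambda$ now produces a monochromatic $I\subseteq\kappa$ of size $\lambda^+$, with common value some finite $q_0\subseteq p$. Writing $\varphi(x,\bar a)=\bigwedge q_0$, the family $(\varphi(x,\bar a_i))_{i\in I}$ is $k$-inconsistent, and this reduces the problem to its single-formula analogue: a fixed $\varphi(x,\bar a)\in p$ is $k$-inconsistent on an infinite family of $B$-conjugates. From there, a standard Ramsey plus compactness extraction (Ramsey stabilizes the $B$-type of $n$-tuples for each $n<\omega$, then compactness combines these) produces a $B$-indiscernible sequence $(\bar b_i)_{i<\omega}$ of realizations of $\operatorname{tp}(\bar a/B)$ with $(\varphi(x,\bar b_i))_{i<\omega}$ still $k$-inconsistent; this witnesses that $\varphi(x,\bar a)\in p$ $k$-divides over $B$, hence so does $p$. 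The main obstacle is the Erd\H{o}s--Rado step: the cardinal $\beth_{k-1}^+(\lambda)$ is precisely what is required in order to color $k$-element subsets with $\lambda$ colors and pull out a monochromatic subfamily, which is why the bound in the statement is tight in the $k-1$ exponent.
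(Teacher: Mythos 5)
Your proposal is correct and takes essentially the same route as the paper: the forward direction by stretching the witnessing $B$-indiscernible sequence to arbitrary length, and the converse by compactness to get a witness for each $k$-subset, Erd\H{o}s--Rado in the form $\beth_{k-1}(\lambda)^+\to(\lambda^+)^k_\lambda$ to extract a single formula that is $k$-inconsistent on an infinite family of $B$-conjugates, and then the standard Ramsey-plus-compactness extraction of a $B$-indiscernible sequence (a step the paper leaves implicit). The only cosmetic difference is that you color $k$-subsets by finite subtypes and conjoin afterwards, whereas the paper colors directly by single formulas $\varphi_s(x,y)\in q$.
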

    \begin{proof}
      If $p$ divides over $B$, By Ehrenfeucht-Mostowsky $p$ has a proper class of $k$-inconsistent $B$-conjugates.
      Let $a$ be an enumeration of $A$ and let $p(x)=q(xa)$ for some $q(xy)$ over $\emptyset$ and $\lambda=\beth_{k-1}^+(|A|+|L|+\aleph_0)$. Conversely, assume there is $(a_i: i\in \lambda)$ a collection of $B$-conjugates of $a$ with $(q(x,a_i):i\in\lambda)$ $k$-inconsistent. For any $s\in[\lambda]^k$ let, by compactness, some $\varphi_s(xy)\in q$ with $\{ \varphi_s(xa_i): i\in s\}$ inconsistent. By Erd\"os-Rado 
      $$
      \beth_{k-1}^+(|A|+|L|+\aleph_0)\longrightarrow ((|A|+|L|+\aleph_0)^+)^k_{|A|+|L|+\aleph_0},
      $$ 
      there is an infinite $I\subseteq\lambda$ and $\varphi(xy)\in q$ such that $\{ \varphi(xa_i): i\in I\}$ is $k$-inconsistent. Therefore $\varphi(xa)$ and so $p$ divides over $B$. 
    \end{proof}

    \begin{proposition}\label{WBMeq}
        A rank is a dividing rank if and only if it has \textit{Weak bounded multiplicity}.
    \end{proposition}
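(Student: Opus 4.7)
The plan is to prove both directions via the preceding lemma, which equates $k$-dividing of a type over $A$ with the existence of arbitrarily many $k$-inconsistent $A$-conjugates. For the direction \emph{Weak bounded multiplicity} $\Rightarrow$ dividing rank, let $p\in\mathfrak A$ be ranked with $A=dom(p)$, and let $q$ be a dividing extension of $p$, say $k$-dividing for some $k$. By the preceding lemma, Ehrenfeucht-Mostowsky, and the saturation of $\mathfrak C$, one can produce arbitrarily many $k$-inconsistent $A$-conjugates of $q$. All such conjugates extend $p$ (automorphisms fix $A$) and share the rank $R(q)$ by \emph{Isomorphism}, and their domains all have cardinality $|dom(q)|$. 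Applying \emph{Weak bounded multiplicity} with $\lambda=|dom(q)|$ yields a bound $\kappa$, and producing strictly more than $\kappa$ such conjugates forces $R(q)\neq R(p)$. Combined with \emph{Heredity} this gives $R(q)<R(p)$.

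For the converse, assume $R$ is a dividing rank, fix $p\in\mathfrak A$ ranked with $A=dom(p)$, $k\ge 2$, and $\lambda$, and suppose a $k$-inconsistent family $(q_i)_{i<\mu}$ of extensions of $p$ with $|dom(q_i)|\le\lambda$ and $R(q_i)=R(p)$ exists with $\mu$ arbitrarily large. Enumerate $dom(q_i)$ by a tuple $b_i$ of length at most $\lambda$. A preliminary pigeonhole on $tp(b_i/A)$ passes to a large sub-collection on which all $b_i$ share a common complete type over $A$. For each ordered $s=\{i_1<\cdots<i_k\}$, compactness applied to the inconsistency of $\bigcup_{j\in s} q_{i_j}$ supplies formulas $\varphi_{s,j}(x,y)\in L$ with $\varphi_{s,j}(x,b_{i_j})\in q_{i_j}$ and $\{\varphi_{s,j}(x,b_{i_j}) : j\le k\}$ inconsistent. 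Apply Erd\"os-Rado to the coloring $s\mapsto(\varphi_{s,1},\ldots,\varphi_{s,k})$ (using at most $|L|+\lambda$ colors); for $\mu$ of the form $\beth_{k-1}(2^{|L|+|A|+\lambda})^+$, this gives an infinite $J$ on which the coloring is constant with value $(\varphi_1,\ldots,\varphi_k)$. Setting $\psi(x,y)=\bigwedge_{j=1}^{k}\varphi_j(x,y)$, one checks that $\psi(x,b_i)\in q_i$ for each $i\in J$ and that $\{\psi(x,b_i):i\in J\}$ remains $k$-inconsistent. Extracting an $A$-indiscernible sequence $(b'_n)_{n<\omega}$ whose EM-type over $A$ is contained in that of $(b_i:i\in J)$, one obtains that $\psi(x,b'_0)$ divides over $A$; the preliminary pigeonhole gives $tp(b'_0/A)=tp(b_{i_0}/A)$ for any fixed $i_0\in J$, so conjugating the sequence shows that $\psi(x,b_{i_0})\in q_{i_0}$ also divides over $A$. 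The dividing rank hypothesis then forces $R(q_{i_0})<R(p)$, contradicting $R(q_{i_0})=R(p)$, so $\mu$ is bounded and we have the required $\kappa$.

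The main obstacle is the combined Erd\"os-Rado step: one must simultaneously collapse the $k$ separately chosen formulas to a single $\psi$ that lies in every $q_i$ for $i\in J$, extract a genuine $A$-indiscernible sequence to witness dividing, and transport the dividing from $\psi(x,b'_0)$ back to some $\psi(x,b_{i_0})\in q_{i_0}$. This last transport is precisely what forces the preliminary type-pigeonhole on $tp(b_i/A)$, and requires care to line up all three arguments on the same refined index set.
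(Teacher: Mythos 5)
Your proof is correct, and your forward direction (\emph{Weak bounded multiplicity} implies dividing rank) coincides with the paper's: a dividing extension $q$ of $p$ yields a proper class of $k$-inconsistent $A$-conjugates of $q$, all extending $p$, all of rank $R(q)$ by \emph{Isomorphism}, with domains of one fixed cardinality, which violates the bound unless $R(q)<R(p)$. The converse is where you genuinely diverge. The paper reduces to the preceding lemma by two pigeonholes: among unboundedly many same-rank $k$-inconsistent extensions with domains of size at most $\lambda$, it first passes to those whose domains are conjugate over $A$ to one fixed set $B$, and then --- since there are only boundedly many complete types over $B$ --- to an unbounded subfamily of pairwise $A$-conjugate types, at which point the lemma's hard direction applies verbatim to a single type. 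You instead run the Erd\"os-Rado machinery directly on the heterogeneous family: after a single pigeonhole on $tp(b_i/A)$, you color $k$-subsets by the $k$-tuple of witnessing formulas, pass to a homogeneous set, collapse to the single conjunction $\psi$, extract an $A$-indiscernible sequence realizing the EM-type, and conjugate back along the common type of the $b_i$ to land dividing on an actual $\psi(x,b_{i_0})\in q_{i_0}$. In effect you re-prove a strengthened form of the lemma in which the $k$-inconsistent family need not consist of conjugates of one type. What each route buys: the paper's is shorter and reuses the lemma as stated; yours is self-contained, makes the bound explicit ($\kappa=\beth_{k-1}(2^{|L|+|A|+\lambda})$ suffices), and shows the conjugacy reduction is dispensable. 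Two small points to tighten, neither a gap: your claim that $\psi(x,b_i)\in q_i$ for every $i\in J$ is only valid after discarding the first $k-1$ elements of $J$, since $i$ must be able to occupy each position $j$ of some $k$-subset of $J$ (this does not affect the $k$-inconsistency of $\{\psi(x,b_i):i\in J\}$, and you only need one suitable $i_0$); and you should say explicitly that the preliminary pigeonhole preserves the cardinality $\mu$ because $\mu=\beth_{k-1}(2^{|L|+|A|+\lambda})^+$ is regular and exceeds the number of complete types over $A$ in at most $\lambda$ variables.
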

    \begin{proof} 
        If $R$ is not a dividing rank, there exist $p\in S(A),q\in S(B), p\subseteq q$ such that $q$ divides over $A$
          and $R(p)=R(q)<\infty$. As $q$ divides over $A$, there is some $k\geq 2$ and a $k$-inconsistent
          proper class of $A$-conjugates of $q$. 
        By \textit{Isomorphism}, all have the same rank as $p$, so, $R$ may not have \textit{Weak bounded
          multiplicity}.
        
        Conversely, suppose that $R$ is a rank  without \textit{Weak bounded multiplicity}. 
        Then, there exist $p\in S(A)$ with $R(p)<\infty$, $k\geq2$ and $\lambda$ such that there are unboundedly many
          extensions of $p$ of the same rank as $p$ which are $k$-inconsistent and whose domain has cardinal at most
          $\lambda$. 
        Without loss of generality we can assume that all domains are of the same size $\lambda$.
        We may even assume that all domains are conjugate over $A$. 
        Finally, we are going to show that there are unboundedly many of such types that are conjugate over $A$. 
        As all domains are equivalent (over $A$) to a given set $B\supseteq A$, any type is a conjugate of some type
          over $B$. 
        As there exist a bounded number of types over $B$, we can choose an unbounded subfamily of types all
          conjugates of a given type over $B$. 
        This provides a $k$-inconsistent proper class of extension of $p$, all conjugate over $A$ with the same rank as $p$. 
        By the previous lemma $p$ has a dividing extension with the same rank and thus $R$ is not a dividing rank.
    \end{proof}

    Inspired in the previous equivalence we can provide an alternative definition of $SU^d$ similar to the original definition of $U$.
    Later we are going to prove that $U=SU^i$, so we will also have a definition of $U$ with the structure of the original $SU$.

    \begin{remark}
        The \textbf{$SU^d$-rank} for a complete type $p(x)\in S(A)$ can be defined as follows:
        	\begin{enumerate}
        		\item $SU^d(p)\geq 0$.
        		\item $SU^d(p)\geq\alpha+1$ if and only if there are some $k\ge2$ and  $\lambda$ such that for any $\kappa$ there are
        		  $\kappa$  many $k$-inconsistent complete types $q(x)$ extending $p$ with $|dom(q)|\leq\lambda$ such that $SU^d(q)\geq\alpha$.
        		\item For $\alpha$ a limit ordinal, $SU^d(p)\geq\alpha$ if and only if $SU^d(p)\geq\beta$ for all $\beta<\alpha$.
        	\end{enumerate}
        $SU^d(p)$ is the supremum of all $\alpha$ such that $SU^d(p)\geq\alpha$.
        If such supremum does not exist we set $SU^d(p)=\infty$.
	\end{remark}
    \begin{proof}
        A standard argument by induction shows a type defined as above has heredity and isomorphism.
        Likewise, it is easy to verify that such a rank is the minimum rank with Weak Bounded Multiplicity, so by Proposition~\ref{WBMeq}, this is the rank $SU^d$ previously defined.
    \end{proof}	

    In the following, we show that in the presence of Extension, forking rank is the same as dividing rank.
	\begin{lemma}\label{ForkingEq}
		Let $p$ be a partial type over $B$.
		Then, $p$ forks over $A$ if and only if there exists $C\supseteq B$ such that every $q\in S(C)$ extending $p$
		  divides over $A$.
	\end{lemma}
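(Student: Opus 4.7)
My plan is to prove both directions directly from the definition of forking, namely that $p$ forks over $A$ iff $p$ implies a finite disjunction $\bigvee_{i<n}\varphi_i(x,b_i)$ where each formula $\varphi_i(x,b_i)$ divides over $A$.

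For the forward direction, assume $p$ forks over $A$, witnessed by such a disjunction. I take $C = B\cup\{b_i : i<n\}$, which contains $B$ and all the parameters of the witnessing dividing formulas. Given any $q\in S(C)$ extending $p$, the type $q$ also implies the disjunction $\bigvee_{i<n}\varphi_i(x,b_i)$; since $q$ is complete and the $b_i$ lie in $C$, $q$ must contain at least one of the disjuncts $\varphi_j(x,b_j)$. As $\varphi_j(x,b_j)$ divides over $A$, this forces $q$ to divide over $A$.

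For the converse, I argue by contrapositive: assume $p$ does \emph{not} fork over $A$ and let $C\supseteq B$ be arbitrary. I want a complete extension $q\in S(C)$ of $p$ that does not divide over $A$. Consider the partial type
\[
\Sigma(x) \;=\; p(x) \;\cup\; \{\,\neg\varphi(x,c) : c\in C,\ \varphi(x,c)\text{ divides over }A\,\}.
\]
I claim $\Sigma$ is consistent. If it were not, compactness would give a finite subset $\{\neg\varphi_i(x,c_i) : i<n\}$ whose union with $p$ is inconsistent, i.e.\ $p\vdash \bigvee_{i<n}\varphi_i(x,c_i)$ with each $\varphi_i(x,c_i)$ dividing over $A$; this contradicts the assumption that $p$ does not fork over $A$. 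Therefore $\Sigma$ is consistent and extends to some $q\in S(C)$. By construction, no formula in $q$ divides over $A$, so $q$ does not divide over $A$.

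There is essentially no serious obstacle here; the only subtlety is ensuring the completeness step in the forward direction, that a complete type implying a finite disjunction of formulas over its domain must contain one of the disjuncts, which is just the defining property of complete types. The proof is really a direct unpacking of the definition of forking combined with a standard compactness argument.
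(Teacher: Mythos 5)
Your proof is correct, and its forward direction is exactly the paper's argument: take $C=B\cup\{b_i:i<n\}$ and observe that any complete type over $C$ extending $p$ must contain one of the dividing disjuncts. The two proofs differ only in the converse. The paper simply invokes the standard extension property of non-forking: since $p$ does not fork over $A$, it has an extension $q\in S(C)$ non-forking over $A$, hence non-dividing over $A$. You instead inline a self-contained compactness argument, showing that $p(x)\cup\{\neg\varphi(x,c): c\in C,\ \varphi(x,c)\text{ divides over }A\}$ is consistent because any inconsistency would exhibit $p$ implying a finite disjunction of formulas dividing over $A$, i.e.\ forking. This buys you independence from the quoted extension property (whose usual proof is essentially your argument), at the cost of a few lines. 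One small point you should make explicit: your final step, from ``no formula of $q$ divides over $A$'' to ``$q$ does not divide over $A$,'' uses the standard fact that dividing of a (partial or complete) type over $C$ is always witnessed by a formula with parameters in $C$ implied by the type; for a complete $q\in S(C)$ such a formula belongs to $q$ by completeness. With that fact cited, your argument is complete and correct.
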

	\begin{proof}
		If $p$ forks over $A$, $p$ implies a finite disjunction
		  $\varphi_0(x,a_0)\vee\ldots\vee\varphi_{n-1}(x,a_{n-1})$ such that every $\varphi_i(x,a_i)$ divides over
		  $A$.
		Let $q\in S(Ba_{<n})$ such that $p\subseteq q$.
		Then, $q\vdash\varphi_0(x,a_0)\vee\ldots\vee\varphi_{n-1}(x,a_{n-1})$ and therefore for some $i<n$,
		  $\varphi_i(x,a_i)\in q(x)$, so $q$ divides over $A$.
        Conversely, if $p$ does not fork over $A$ and $C\supseteq B$, we can choose an extension $q\in S(C)$
          non-forking over $A$ and therefore non-dividing over $A$.
	\end{proof}

	\begin{proposition}\label{relation_dividing_forking} 
		Any dividing rank with \textit{Extension} is a forking rank.
	\end{proposition}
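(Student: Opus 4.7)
The plan is to combine Lemma \ref{ForkingEq} (the standard characterization of forking as ``eventually dividing'') with \emph{Extension} in order to move from a forking extension to a dividing one of the same rank.

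Concretely, let $p(x)\in S(A)$ be ranked and let $q(x)\in S(B)$ be a forking extension of $p$. First I would observe that, by \emph{Heredity}, $R(q)\leq R(p)<\infty$, so $q$ is ranked as well. Then I would apply Lemma \ref{ForkingEq} to $q$, viewed as a partial type over $B$ forking over $A$: this yields a set $C\supseteq B$ such that \emph{every} complete type in $S(C)$ extending $q$ divides over $A$.

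Next, I would use \emph{Extension}: since $q$ is ranked and $B\subseteq C$, there is some $r(x)\in S(C)$ extending $q$ with $R(r)=R(q)$. By the choice of $C$, this $r$ divides over $A$, so $r$ is a dividing extension of $p$. Since $R$ is a dividing rank, we conclude $R(p)>R(r)=R(q)$, as required.

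There is no real obstacle to overcome; the content is essentially packaged into Lemma \ref{ForkingEq}. The only subtlety worth noting is that \emph{Extension} is applied to $q$, not to $p$, so one has to justify in passing that $q$ itself is ranked, which follows immediately from \emph{Heredity}.
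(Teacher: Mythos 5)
Your proof is correct and follows essentially the same route as the paper: applying Lemma~\ref{ForkingEq} to the forking extension $q$ and then using \emph{Extension} to produce a same-rank extension over $C$, which must divide over $A$. The only cosmetic difference is that you argue directly (noting via \emph{Heredity} that $q$ is ranked so \emph{Extension} applies to it) whereas the paper phrases the same argument contrapositively, starting from a failure of the forking rank property where $R(p)=R(q)$.
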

	\begin{proof}
        Suppose $R$ is a rank, has \textit{Extension} and  is not a forking rank.
        Let $p\in S(A),q\in S(B), p\subseteq q$ such that $p$ is ranked, $q$ forks over $A$ and $R(p)=R(q)$.
        By Lemma~\ref{ForkingEq}, let $C\supseteq B$ such that every extension of $q$ to $C$ divides over $A$.
		By \textit{Extension}, there exists $q'\in S(C)$ with $R(q')=R(p)$.
		As $q'$ is a dividing extension of $p$, $R$ is not a dividing rank.
	\end{proof}

    \begin{remark}\label{FR+NIPimpLSR}
        A forking rank whose ranked types are NIP is a Lascar-splitting rank.
    \end{remark}
	\begin{proof} 
	    By Facts \ref{LSFacts},  over a NIP type $\ind^i$ is equivalent to $\ind^f$.
	\end{proof}
	
    Now, we are going to introduce a property stronger than bounded multiplicity, but equivalent under Extension.
	\begin{definition}\label{SBM}
		We say that a rank $R$ on a class of complete types $\mathfrak{A}$ has the property of 
		  \textbf{Strong bounded multiplicity} if for any ranked $p\in\mathfrak{A}$ and for any	 $k\geq2$, there is some $\kappa$ such that any $k$-inconsistent collection of  complete types extending $p$, all with the same rank as $p$, has cardinal at most $\kappa$.
	\end{definition}
    It is immediate that any rank with \textit{Strong bounded multiplicity} has \textit{Bounded multiplicity} and
      \textit{Weak bounded multiplicity}.
    
    \begin{lemma}\label{BMimpliesST}
        Let $R$ be a rank with Bounded multiplicity.
        Then, every type ranked by $R$ is stable.
    \end{lemma}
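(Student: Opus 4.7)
The plan is to argue by contradiction, combining Lemma~\ref{UleqBM} with the Bounded Multiplicity hypothesis. By Lemma~\ref{UleqBM}, any $R$-ranked type $p$ satisfies $U(p)\le R(p)<\infty$, hence $p$ is supersimple and, in particular, simple. Since a type is stable iff it is simple and NIP, it suffices to show that any $R$-ranked type is NIP.

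Assume, towards a contradiction, that $p\in S(A)$ is ranked with $R(p)=\alpha$ and has IP witnessed by a formula $\varphi(x,y)$. Let $\kappa$ witness BM for $p$. Extract an $A$-(set-)indiscernible sequence $(a_i)_{i\in I}$ realizing IP via $\varphi$, with $|I|$ much larger than $\kappa$, and put $B=A\cup\{a_i:i\in I\}$. For every $S\subseteq I$ the partial type $p(x)\cup\{\varphi(x,a_i)^{[i\in S]}\}$ is consistent, and distinct $S$ give distinct complete extensions of $p$ in $S(B)$, producing $2^{|I|}$ such extensions.

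The core step is to leverage set-indiscernibility together with Isomorphism: every permutation $\sigma\in\mathrm{Sym}(I)$ extends to an $A$-automorphism of $\mathfrak C$ fixing $B$ setwise and therefore acting on $S_p(B)$; by Isomorphism, $A$-conjugate extensions have the same $R$-rank. Choosing completions compatibly along the symmetric-group action, the $A$-orbit of a ``balanced'' completion $q_{S_0}$ with $|S_0|=|I\setminus S_0|=|I|$ has cardinality at least $2^{|I|}$, and all its elements share the rank $R(q_{S_0})$. If $R(q_{S_0})=\alpha$ this immediately violates BM, as $2^{|I|}>\kappa$.

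The main obstacle is the remaining case, where every orbit produced in this way has rank strictly less than $\alpha$. I plan to handle it by induction on $\alpha$. The base case $\alpha=0$ is easy: by Heredity every extension of $p$ has rank $0$, so BM uniformly bounds $|S_p(B)|\le\kappa$, making $p$ bounded and hence stable. In the inductive step, under the hypothesis that every type of rank strictly less than $\alpha$ is stable, the delicate point is to force some completion of $\Pi_S:=p\cup\{\varphi(x,a_i)^{[i\in S]}\}$ to have $R$-rank exactly $\alpha$. Here I would exploit simplicity of $p$: by the extension property of $U$-rank in simple theories each $\Pi_S$ admits a completion non-forking over $A$, so with $U$-rank equal to $U(p)$; combined with $U\le R$ (Lemma~\ref{UleqBM}) and Heredity, one aims to select one such completion having $R$-rank $\alpha$, at which point the orbit argument produces $>\kappa$ many rank-$\alpha$ extensions of $p$ in $S_p(B)$ and contradicts BM. Bridging the gap between $U$-rank preservation and $R$-rank preservation in this last step is where I expect the most care to be required.
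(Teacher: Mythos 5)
Your overall strategy (reduce, via ``stable $=$ simple $+$ NIP'' and Lemma~\ref{UleqBM}, to showing that ranked types are NIP, then refute IP by an orbit-counting argument) is a genuinely different route from the paper, which simply invokes the counting argument of Poizat's Lemma~17.1: there, \textit{Heredity}, \textit{Isomorphism} and \textit{Bounded multiplicity} are used directly to bound the number of extensions of $p$ over any $B$ of bounded size, giving stability in the sense of Definition~\ref{def: supersimple} without any detour through simplicity or NIP. Your route, however, has two genuine gaps. First, the extraction step is false as stated: a formula with IP on $p$ yields an \emph{order}-indiscernible sequence $(a_i)_{i\in I}$ over $A$ realizing all $\varphi$-patterns, but in general no \emph{set}-indiscernible (totally indiscernible) one. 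For example, in the ordered random graph (the Fra\"iss\'e limit of finite ordered graphs) the edge relation has IP on the unique $1$-type over $\emptyset$, yet no infinite totally indiscernible family of distinct tuples exists, since the linear order asymmetrically distinguishes transpositions. Consequently $\mathrm{Sym}(I)$ does not act by $A$-automorphisms; only order-preserving self-embeddings of $I$ extend, and if $I$ is, say, well-ordered these give no nontrivial images of $S_0$ at all. (One can partially repair this by taking $I$ a suitably homogeneous dense order and moving a dense-codense $S_0$ by order-automorphisms, but you did not do this, and it does not remove the second gap.)

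Second, and more seriously, the step you yourself flag as delicate is exactly where the proof fails: \textit{Bounded multiplicity} for $p$ only bounds the number of extensions $q$ with $R(q)=R(p)$, and nothing forces any completion of $\Pi_{S_0}$ to have $R$-rank exactly $\alpha$. Your proposed bridge -- take a completion non-forking over $A$, so of full $U$-rank, and combine with $U\leq R$ -- only yields $R(q)\geq U(p)$, which may be strictly below $\alpha=R(p)$; a rank with \textit{Bounded multiplicity} alone need not satisfy \textit{Extension} nor the reciprocal of the forking rank property (the paper explicitly treats these as separate hypotheses and leaves closely related implications as open questions), so there is no tool to transfer $U$-rank preservation into $R$-rank preservation. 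The fallback induction on $\alpha$ also does not close: knowing that all completions of the patterns $\Pi_S$ over $B$ have rank $<\alpha$ and are (by inductive hypothesis) stable gives no contradiction with IP of $p$ itself, since IP of $p$ concerns the number of its extensions, not properties of individual completions. The paper's cited argument avoids all of this by never needing extensions of prescribed rank: it encodes each extension $q\in S(B)$ by a finite restriction of minimal rank together with its position among the boundedly many same-rank extensions of that restriction, and counts.
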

    \begin{proof}
        See the proof of Lemma 17.1 in Poizat\cite{Poizat}.
    \end{proof}

    \begin{proposition}
        Let $R$ be a rank.
        \begin{enumerate}
            \item If $R$ is a Lascar-splitting Rank, then it has Strong Bounded Multiplicity.
            \item If $R$ has Bounded Multiplicity and Extension, then it is a Lascar-Splitting rank.
        \end{enumerate}        
    \end{proposition}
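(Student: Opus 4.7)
The plan treats the two parts separately, using rather different tools.

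For part \emph{1}, fix a ranked $p(x) \in S(A)$, $k\ge 2$, and let $(q_i : i \in I)$ be a $k$-inconsistent family of complete types extending $p$ with $R(q_i) = R(p)$ for every $i$. Since $R$ is a Lascar-splitting rank, the equality $R(p) = R(q_i)$ forces that no global extension of $q_i$ can Lascar-split over $A$ (otherwise $R(p) > R(q_i)$). For each $i$, pick any global extension $q_i^* \supseteq q_i$: it is automatically a global extension of $p$ not Lascar-splitting over $A$, and by Fact~\ref{LSFacts}(1) there are at most $\mu := 2^{2^{|T|+|A|}}$ such global types. A pigeonhole argument finishes the job: for each value $q^*$ in the image of $i \mapsto q_i^*$, all the $q_i$ in its fiber are subsets of $q^*$ so their union is consistent, and by $k$-inconsistency the fiber has size at most $k-1$; hence $|I| \le (k-1)\mu$.

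For part \emph{2}, the strategy is a chain: \textbf{(a)} Bounded Multiplicity $+$ Extension $\Rightarrow$ dividing rank; \textbf{(b)} dividing rank $+$ Extension $\Rightarrow$ forking rank (Proposition~\ref{relation_dividing_forking}); \textbf{(c)} forking rank with NIP ranked types $\Rightarrow$ Lascar-splitting rank (Remark~\ref{FR+NIPimpLSR}). Step (c) is available because Lemma~\ref{BMimpliesST} shows that ranked types are stable, hence NIP. The real work is step (a). Given ranked $p \in S(A)$ and a dividing extension $q \supseteq p$, I would assume for contradiction $R(p) = R(q) = \alpha$. The dividing hypothesis produces a $k$-inconsistent proper class of $A$-conjugates of $q$; picking $\lambda > (k-1)\kappa$ of them $(q^{\sigma_j} : j < \lambda)$, where $\kappa$ is the BM bound for $p$, Isomorphism yields that each $q^{\sigma_j}$ has rank $\alpha$. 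Setting $C = A \cup \bigcup_{j<\lambda} dom(q^{\sigma_j})$ and using Extension, lift each $q^{\sigma_j}$ to some $\tilde q^{\sigma_j} \in S(C)$ with $R(\tilde q^{\sigma_j}) = \alpha$. These lifts are rank-$\alpha$ extensions of $p$ over $C$, so Bounded Multiplicity caps the number of distinct ones at $\kappa$. On the other hand, $k$-inconsistency of $(q^{\sigma_j})$ prevents more than $k-1$ of them from lying inside any single $\tilde q^*$, so there must be at least $\lambda/(k-1) > \kappa$ distinct lifts---a contradiction.

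The main obstacle is step (a) of part \emph{2}: one must simultaneously relocate distinct $A$-conjugates of $q$ into a single common domain $C$ via Extension while preserving rank, and balance the BM bound (few same-rank extensions of $p$ over $C$) against $k$-inconsistency (few conjugates fit inside a given lift). Once this step is in hand, (b) and (c) are immediate from results already cited in the excerpt.
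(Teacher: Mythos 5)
Your proof is correct and takes essentially the same route as the paper's: in part \emph{1} you argue, as the paper does, that same-rank extensions of $p$ have all their global extensions non-Lascar-splitting over $A$, then combine the bound $2^{2^{|T|+|A|}}$ of Fact~\ref{LSFacts} with the observation that $k$-inconsistency lets at most $k-1$ members of the family sit inside a single global type; in part \emph{2} you use the same Extension-lifting-to-a-common-domain counting against Bounded Multiplicity, followed by the identical chain through Proposition~\ref{relation_dividing_forking}, Lemma~\ref{BMimpliesST} and Remark~\ref{FR+NIPimpLSR}. The only (immaterial) difference is that in part \emph{2} you apply the lifting argument directly to the $k$-inconsistent class of $A$-conjugates produced by dividing, proving the dividing-rank property outright, whereas the paper applies the same trick to an arbitrary $k$-inconsistent family to first establish Strong Bounded Multiplicity and then cites Proposition~\ref{WBMeq}.
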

    \begin{proof} \
    
      \emph{1}. Let $R$ be a Lascar-splitting rank without Strong Bounded multiplicity.
        Then, there exists a type $p\in S(A)$ with $R(p)<\infty$ and $k\geq2$ such that for any $\kappa$, there
          are $\kappa$ complete extensions $k$-inconsistent, all with the same rank as $p$. 
         As $R$ is a Lascar-splitting rank, we can extend all these types to global types non Lascar-splitting over $A$.
         It is possible that some of those global extensions coincide, but as they are $k$-inconsistent, each
           type is equal, at most, to other $k-2$ types.
         So, the number of global extensions that we have built, non Lascar-splitting over $A$ is the same, $\kappa$.
         This is a contradiction.
              
            
      \emph{2}.  We first show that a rank $R$ with Bounded Multiplicity and extension has Strong Bounded Multiplicity. 
        Assume not and let $p$ be ranked having unboundedly many $k$-inconsistent extensions for some $k\ge 2$,
          all with the same rank as $p$.
        Then, for any $\kappa$ we can choose $\kappa$ many of these extensions and, by extension, extend them
          to complete types over a common set of parameters, all with the same rank as $p$.
        By $k$-inconsistency, each type is equal, at most, to other $k-2$ types, so, the number of types is $\kappa$.
          
        As a rank with Strong bounded multiplicity has Weak bounded multiplicity, with extension is a forking
          rank and as all its ranked types are stable, then by remark \ref{FR+NIPimpLSR} it is a Lascar-splitting rank.
    \end{proof}
   
    As these three properties are equivalent under extension, we can define equivalently abstract rank changing
      Bounded multiplicity by Strong bounded multiplicity or Lascar-splitting rank.

    Next diagram is a summary of the implications between properties.
    All these results are proven so far, except the two dotted arrows that are proven later.
    \vspace{5mm}
    
    \begin{tikzpicture}
		\tikzstyle{bl} = [draw, scale=.8, rectangle, rounded corners, align=center]
		\tikzset{>={Latex[width=2mm,length=2mm]}}

		\node (lsr) [bl]                {Lascar-splitting rank};
		\node (sbm) [bl,below=of lsr]   {Strong bounded mult.};
		\node (bm)  [bl,below=of sbm]   {Bounded multiplicity};
		\node (sst) [bl,below=of bm]    {All ranked types\\are superstable};
		\node (wbm) [bl,right=of sbm]   {Weak bounded mult.};
		\node (dr)  [bl,right=of wbm]   {Dividing rank};
		\node (fr)  [bl,above=of dr]  	{Forking rank};
		\node (ssm) [bl,below=of dr]    {All ranked types\\are supersimple};

		\draw [->]        (lsr) -- (sbm);
		\draw [->]        (sbm) -- (bm);
		\draw [->,dotted] (bm)  -- (sst);
		\draw [->]        (lsr) -- (fr);
		\draw [->]        (lsr) -- (fr);
		\draw [->]        (fr)  .. controls (9,1) and (0,1) .. 
	        node[above, scale=.75] {\textbf{+}all ranked types are NIP} (lsr);
		\draw [->]        (sbm) -- (wbm);
		\draw [<->]       (wbm) -- (dr);
		\draw [->]        (fr)  -- (dr);
		\draw [->,dotted] (dr) -- (ssm);
		
		\draw[rounded corners, dashed] (-1.9,.4) rectangle (1.9,-3.5);
		\draw[rounded corners, dashed] (2.5,.4)  rectangle (9.4,-2);
		
		\draw (5.8, -2.3) node[scale=.75] {equivalent under extension};
		\draw (-2.2,-1.5) node[scale=.75, rotate=90, align=center] {equivalent under extension};
	\end{tikzpicture}	
    \vspace{1mm}

    We are going to show that under Bounded multiplicity, Extension is equivalent to the reciprocal of the forking rank property:
    \begin{proposition} \
        \begin{enumerate}
            \item A rank with Bounded multiplicity and Extension verifies the reciprocal of the forking rank property 
              (a non-forking extension of a ranked type has the same rank).
            \item A rank all whose ranked types are simple verifying the reciprocal of the forking rank property has Extension.
        \end{enumerate}	    
    \end{proposition}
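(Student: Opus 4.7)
Plan. I would treat the two parts separately.

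For part 1, fix $p\in S(A)$ ranked and $q\in S(B)$ a non-forking extension; Heredity immediately delivers $R(q)\leq R(p)$, so the work is the reverse inequality. The key inputs from the preceding material are that $p$ is stable (Lemma~\ref{BMimpliesST}) and that $R$ is a Lascar-splitting rank, and in particular a forking rank (by the preceding proposition, since $R$ has Bounded multiplicity and Extension).

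With these in hand I would choose a sufficiently saturated model $M\supseteq A$ and pass to $B':=B\cup M$. Since $p$ is stable, $p|M$ is the unique non-forking extension of $p$ to $M$, and moreover $p|M$ is stationary, so every superset of $M$ carries a unique non-forking extension of $p|M$. Using the extension property of non-forking available for simple types, extend $q$ to some $\tilde q\in S(B')$ non-forking over $A$; by transitivity of non-forking, $\tilde q$ is also non-forking over $M$ and restricts to $p|M$, so it is the unique non-forking extension of $p|M$ to $B'$. On the other side, Extension for $R$ produces $q'\in S(B')$ extending $p$ with $R(q')=R(p)$; since $R$ is a forking rank, $q'$ is non-forking over $A$, and the same uniqueness argument forces $q'=\tilde q$. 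Therefore $R(\tilde q)=R(q')=R(p)$, and Heredity applied to $q\subseteq\tilde q$ yields $R(q)\geq R(\tilde q)=R(p)$, closing part 1.

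For part 2, the argument is direct. Let $R$ be as in the hypothesis and fix ranked $p\in S(A)$ and $A\subseteq B$. Simplicity of $p$ delivers, via the extension property of non-forking for simple types, some $q\in S(B)$ extending $p$ and non-forking over $A$; the reciprocal of the forking rank property then gives $R(q)=R(p)$, which is exactly Extension. The main technical obstacle lies in part 1, namely the stationarity step: one must invoke ``the non-forking extension of a stable type over a sufficiently saturated model is unique'' in a setting where only the type, not the ambient theory, is stable, which is the standard local stability fact but requires careful bookkeeping when citing it from Poizat or Pillay. Once $\tilde q=q'$ is secured, everything else is a two-line application of Heredity.
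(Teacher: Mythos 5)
Your part 2 is correct and coincides with the paper's argument: simplicity gives a non-forking extension of $p$ to any $B\supseteq A$, and the reciprocal of the forking rank property transfers the rank. The setup of part 1 is also on the paper's track (Heredity for one inequality; Bounded multiplicity plus Extension yields that $p$ is stable and that $R$ is a forking rank). But part 1 has a genuine gap at the stationarity step. You claim that, $p$ being stable, ``$p|M$ is the unique non-forking extension of $p$ to $M$''. This is false for a stable type over an arbitrary set $A$: stable types have \emph{boundedly many} global non-forking extensions, not a unique one, and in general $p\in S(A)$ is not stationary. Concretely, in the (stable) theory of one equivalence relation with exactly two infinite classes, the unique $1$-type over $\emptyset$ has two distinct non-forking extensions to any model $M$ (generic in either class). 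Consequently your identification $q'=\tilde q$ fails: both $q'$ and $\tilde q$ are non-forking extensions of $p$ over $A$, but their restrictions to $M$ may be \emph{different} non-forking extensions of $p$, and stationarity of types over models (which is the true local fact) cannot force them to agree. In the two-class example, $q'$ could concentrate on one class while $q$, hence $\tilde q$, concentrates on the other, and nothing in your argument rules this out.

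The repair is exactly where the paper goes instead: all global non-forking extensions of a stable type are \emph{$A$-conjugate} (cited from Casanovas), rather than equal. Take a global $p'\supseteq p$ with $R(p')=R(p)$ by Extension (so $p'$ is non-forking over $A$ since $R$ is a forking rank), take a global non-forking extension $\tilde q'$ of $q$, and let $f\in Aut(\mathfrak{C}/A)$ send $\tilde q'$ to $p'$. Then $p\subseteq f(q)\subseteq p'$, so Heredity sandwiches $R(f(q))$ between $R(p)$ and $R(p')=R(p)$, and \emph{Isomorphism} gives $R(q)=R(f(q))=R(p)$. Note that your proposal never invokes Isomorphism, which is a symptom of the gap: with only conjugacy (not uniqueness) available, Isomorphism of the rank is precisely what is needed to transfer the rank computation from $f(q)$ back to $q$.
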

    \begin{proof} \ 

      \emph{1}. Bounded multiplicity and Extension is equivalent to forking-rank with Extension and with all ranked types stable.
        Let $q\in S(B)$ a non-forking extension of $p\in S(A)$ ranked.
        By Extension $p$ has a global extension $p'$ with the same rank as $p$.
        By forking rank, $p'$ is a non-forking extension.
        As $p$ is ranked, then it is stable and it is known that all global non-forking extensions of a
          stable type are $A$-conjugate (see for example Casanovas\cite{Casanovas11b}). 
        Picking a global extension $q'$ of $q$ non-forking over $A$, $p'$ and $q'$ are conjugate over $A$.
        Therefore, $q$ is a conjugate of some restriction of $p'$, so $q$ has the same rank as $p$.
        
      \emph{2}. Let $p\in S(A)$ ranked and $B\supseteq A$. 
        As $p$ is simple it does not fork over its parameter set and therefore it has a non-forking extension to $B$ which must have the same rank as $p$.
    \end{proof}
    As a consequence of this proposition, the equivalences under Extension in the previous diagram are also
      equivalences under the reciprocal of the forking rank property.
  
  	\begin{corollary}\label{relation_SU_U}
        If $p$ is a complete $NIP$ type, $SU^f(p)=U(p)$. 
	\end{corollary}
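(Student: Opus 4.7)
My plan is to prove the corollary by sandwiching $SU^f(p)$ via $SU^i(p)$, exploiting the coincidence of forking and Lascar-splitting on NIP types.

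First, I would restrict attention to the class $\langle p\rangle$ of all completions extending $p$, using the fact that NIP-ness is inherited by extensions, so every type in $\langle p\rangle$ is NIP. By the earlier remark on restrictions of ranks, the restriction of $SU^f$ to $\langle p\rangle$ is still a forking rank; since all of its ranked types are NIP, Remark~\ref{FR+NIPimpLSR} upgrades this restriction to a \emph{Lascar-splitting} rank on $\langle p\rangle$.

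Next, I would invoke the minimality built into the definition: $SU^i$ was noted to be the smallest Lascar-splitting rank on any class of complete types. Applying this minimality to the Lascar-splitting rank $SU^f|_{\langle p\rangle}$ yields $SU^i(p)\le SU^f(p)$. The universal inequality $SU^f(p)\le SU^i(p)$, observed immediately after the definitions of $SU^d,SU^f,SU^i$, gives the reverse direction, so $SU^f(p)=SU^i(p)$ whenever $p$ is NIP. The conclusion $SU^f(p)=U(p)$ then follows from the identification $SU^i=U$, announced in the abstract and to be established later in the paper.

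The main obstacle in this route is concealed in that forward reference: without $SU^i=U$ already in hand, the corollary cannot be closed on the basis of the excerpt alone. One can, however, independently obtain the easier direction $U(p)\le SU^f(p)$ from what is already available, since $SU^f|_{\langle p\rangle}$, being a Lascar-splitting rank, inherits Strong Bounded Multiplicity and hence Bounded Multiplicity by the preceding proposition, after which Lemma~\ref{UleqBM} applies. The opposite inequality $SU^f(p)\le U(p)$ is essentially the content of $SU^i=U$ restricted to the NIP setting, and I do not see a shortcut that avoids invoking that later identification.
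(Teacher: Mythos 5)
Your proof is correct, and the half you give directly --- $U(p)\le SU^f(p)$, obtained by restricting $SU^f$ to $\langle p\rangle$ (all of whose types are indeed NIP, since NIP passes to extensions and conjugates), upgrading the restriction to a Lascar-splitting rank via Remark~\ref{FR+NIPimpLSR}, extracting Strong (hence plain) Bounded multiplicity, and applying Lemma~\ref{UleqBM} --- is exactly the paper's argument for that direction. Where you diverge is the inequality $SU^f(p)\le U(p)$: you route it through $SU^f(p)=SU^i(p)$ on NIP types together with the identification $SU^i=U$, which the paper proves only afterwards. This is not circular: the paper's proof of $SU^i=U$ nowhere uses Corollary~\ref{relation_SU_U}, so read against the whole paper your argument stands, and your observation $SU^f(p)=SU^i(p)$ for NIP $p$ is a correct (slightly stronger) intermediate fact. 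However, the shortcut you say you do not see does exist, and it is precisely the paper's first line: $U$ is an abstract rank, i.e.\ it has Bounded multiplicity and Extension, so by the proposition on equivalences under Extension it is a Lascar-splitting rank, hence a forking rank; minimality of $SU^f$ among forking ranks then gives $SU^f(p)\le U(p)$ outright, for \emph{every} type, with no NIP hypothesis and no appeal to $SU^i=U$. So the paper's route is shorter, avoids the forward reference entirely, and isolates the NIP assumption in the single direction where it is actually needed.
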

	\begin{proof}
	    As $U$ is an abstract rank, $U$ is a forking rank and therefore, $SU^f(p)\leq U(p)$.
	    If $p$ is $NIP$, $SU^f$ has \textit{Bounded multiplicity} on $\left<p\right>$ and as $U$ is the lowest rank 
  	        with Bounded multiplicity, $U(p)\leq SU^f(p)$.
	\end{proof}

    \begin{corollary}\label{SUdExt}
	    If $SU^d$ has Extension on a class of complete types $\mathfrak{A}$, then for any $p\in\mathfrak{A}$, $SU^d(p)=SU^f(p)$.
	\end{corollary}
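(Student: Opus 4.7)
The plan is to bootstrap the hypothesis into one of the earlier structural results and then appeal to the minimality of $SU^f$. From item 4 of the remark following the definitions of the foundation ranks, we already have the universal inequality $SU^d(p)\leq SU^f(p)$ for every $p\in\mathfrak{A}$, so only the reverse inequality requires work.

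The key observation is that, by definition, $SU^d$ is itself a dividing rank on $\mathfrak{A}$ (item 1 of that same remark). Combined with the hypothesis that $SU^d$ has Extension on $\mathfrak{A}$, Proposition~\ref{relation_dividing_forking} applies verbatim and yields that $SU^d$ is a forking rank on $\mathfrak{A}$. (Note that no closure issue arises: $\mathfrak{A}$ is closed under extensions by the definition of a class of complete types, so the witness $q\in S(B)$ provided by Extension automatically lies in $\mathfrak{A}$, and the proof of Proposition~\ref{relation_dividing_forking} can be run inside $\mathfrak{A}$.)

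Once $SU^d$ is established as a forking rank on $\mathfrak{A}$, we invoke the minimality stated in item 2 of the remark: $SU^f$ is the lowest forking rank on any class of complete types. The routine induction sketched there (on $\alpha$, showing $SU^f(p)\geq\alpha\Rightarrow R(p)\geq\alpha$ for any forking rank $R$) then gives $SU^f(p)\leq SU^d(p)$ for every $p\in\mathfrak{A}$. Chaining this with the previous inequality closes the argument.

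There is no real obstacle here: the work has been done in Proposition~\ref{relation_dividing_forking} and in the minimality remark, and the statement is essentially the observation that, in the presence of Extension for $SU^d$, the general inequality $SU^d\leq SU^f$ is forced to be an equality because $SU^d$ itself becomes a candidate forking rank that $SU^f$ must lie below.
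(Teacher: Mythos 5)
Your proof is correct and follows exactly the paper's own route: apply Proposition~\ref{relation_dividing_forking} to conclude from Extension that $SU^d$ is a forking rank, then use the minimality of $SU^f$ among forking ranks to get $SU^f(p)\leq SU^d(p)$, the reverse inequality being automatic. Your extra remarks on closure of $\mathfrak{A}$ under extensions are accurate but the paper treats them as implicit.
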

	\begin{proof}
	    If $SU^d$ has extension then it is a forking rank. 
	    As $SU^f$ is the lowest forking rank, $SU^f(p)\leq SU^d(p)$.
	\end{proof}

    Now, we are going to ckeck that $U=SU^i$. So, from now on, we will not use the notation $SU^i$.
    \begin{corollary}
        $U$ and $SU^i$ are the same rank.
    \end{corollary}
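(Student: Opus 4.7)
The plan is to prove the two inequalities $SU^i(p)\leq U(p)$ and $U(p)\leq SU^i(p)$ separately, relying almost entirely on machinery already set up.

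For $SU^i\leq U$, I would use that $U$ is an abstract rank (in fact the lowest one, as recalled in the introduction), so it satisfies both \emph{Bounded multiplicity} and \emph{Extension}. By item~2 of the Proposition preceding Corollary~\ref{relation_SU_U}, this forces $U$ to be a Lascar-splitting rank. Since $SU^i$ was introduced as the foundation rank of $R_i$ and noted in the Remark to be the lowest Lascar-splitting rank on any class of types, $SU^i(p)\leq U(p)$ for every complete type $p$.

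For $U\leq SU^i$, I would invoke Lemma~\ref{UleqBM}, which reduces the task to verifying that $SU^i$ has \emph{Bounded multiplicity}. Fix a ranked $p\in S(A)$ with $SU^i(p)=\alpha<\infty$, let $B\supseteq A$, and consider any extension $q\in S(B)$ of $p$ with $SU^i(q)=\alpha$. Since $SU^i$ is the foundation rank of $R_i$ and $SU^i(q)\not<SU^i(p)$, we cannot have $pR_iq$; by the definition of $R_i$ this means \emph{no} global extension of $q$ Lascar-splits over $A$. Hence any global extension $\tilde q$ of $q$ is a global type over $\mathfrak{C}$ extending $p$ that does not Lascar-split over $A$. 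By Fact~\ref{LSFacts}(1) the collection of such global types has cardinality at most $2^{2^{|T|+|A|}}$, and since distinct types in $S(B)$ are pairwise incompatible and therefore have disjoint sets of global extensions, the same bound controls the number of eligible $q$'s.

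The only subtle point, and the step I expect to be the main (minor) obstacle, is making sure that the contrapositive of the definition of $R_i$ is used in the right direction: namely, that equality of ranks yields \emph{no} global extension of $q$ Lascar-splitting over $A$, which is exactly what lets us appeal to the boundedness of global non-Lascar-splitting extensions in Fact~\ref{LSFacts}(1). Once this is in place, both inequalities are immediate applications of previously established results, so no new combinatorics is needed.
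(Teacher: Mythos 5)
Your proposal is correct and takes essentially the same route as the paper: the direction $SU^i\leq U$ is verbatim the paper's argument ($U$ is an abstract rank, hence a Lascar-splitting rank by the Proposition on Bounded multiplicity and Extension, and $SU^i$ is the lowest Lascar-splitting rank), while for $U\leq SU^i$ the paper just cites that a Lascar-splitting rank has (Strong) Bounded multiplicity and then applies Lemma~\ref{UleqBM}. Your only deviation is that you re-derive Bounded multiplicity for $SU^i$ by hand, but your inline argument --- rank-equal extensions cannot stand in $R_i$ to $p$, so they extend to global non-Lascar-splitting types over $A$, which Fact~\ref{LSFacts}(1) bounds by $2^{2^{|T|+|A|}}$, with injectivity from pairwise inconsistency of distinct types over $B$ --- is precisely the paper's own proof of that Proposition specialized to $SU^i$, so nothing genuinely new is involved and the argument is sound.
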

    \begin{proof}
        As a Lascar-splitting rank has Bounded multiplicity and $U$ is the lowest rank with Bounded multiplicity, $U(p)\leq SU^i(p)$.
        As $U$ is an abstract rank, it is a Lascar-splitting rank and as $SU^i$ is the lowest Lascar-splitting rank, 
          then $SU^i(p)\leq U(p)$.
    \end{proof}

    Now it is easy to characterize supersimple types in terms of existence of forking and dividing ranks. 
    We remember the definition of a supersimple type and its characterizations from C\'ardenas, Farr\'e\cite{CardenasFarre2}, \cite{CardenasFarre1}:
	\begin{definition}
		A partial type  $p(x)$ over $A$ is \textbf{supersimple} if and only if for every $B\supseteq A$ and every realization $a$ of
		  $p(x)$, there exists a finite set $B_0\subseteq B$ with $a\ind^d_{AB_0}B$.
	\end{definition}

    \begin{proposition}\label{EqsSupersimple}  
    	The definition of supersimple does not depend on the set of parameters. 
    	Moreover, the following are equivalent for a partial type $p$ over $A$: 
    	  \emph{1}. $p$ is supersimple, 
    	  \emph{2}. For every completion $q\in S(A)$ of $p$, $SU^d(q)<\infty$.
    	  \emph{3}. For every completion $q\in S(A)$ of $p$, $SU^f(q)<\infty$.
    	  \emph{4}. For every $a$ realizing $p$ and every chain $(B_i:i\in \omega)$ extending $A$ there is some
                $i\in\omega$ such that $a\ind^f_{B_i}{B_{i+1}}$.
	\end{proposition}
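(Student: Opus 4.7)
The plan is to prove the equivalence of (1)--(4) via the cycle $(1)\Rightarrow(4)\Rightarrow(3)\Rightarrow(2)\Rightarrow(1)$ and then derive parameter-independence. The easy step is $(3)\Rightarrow(2)$, which is immediate from $SU^d(q)\le SU^f(q)$ recorded in an earlier remark.

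For $(1)\Rightarrow(4)$ I would invoke from \cite{CardenasFarre2} that every supersimple type is simple, so on $\langle p\rangle$ the relations $\ind^d$ and $\ind^f$ coincide and satisfy base monotonicity. Given $a\models p$ and a chain $(B_i)_{i\in\omega}$ with $A\subseteq B_0$, set $B=\bigcup_iB_i$, use supersimplicity to find a finite $B'\subseteq B$ with $a\ind^d_{AB'}B$, and choose $n$ so that $B'\subseteq B_n$; monotonicity then yields $a\ind^f_{B_n}B_{n+1}$. For $(4)\Rightarrow(3)$ I argue the contrapositive: if some completion $q\in S(A)$ of $p$ has $SU^f(q)=\infty$, I iterate the foundation-rank definition (using Isomorphism to reduce to a set of conjugacy classes of forking extensions at each step) to build a chain $A=B_0\subseteq B_1\subseteq\cdots$ and types $q_i\in S(B_i)$ with $q_0=q$, each $q_{i+1}$ a forking extension of $q_i$, and $SU^f(q_i)=\infty$ throughout; any $a$ realizing $\bigcup_iq_i$ realizes $p$ and witnesses failure of (4).

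The main obstacle is $(2)\Rightarrow(1)$, which I handle by a minimal-rank descent. Given $B\supseteq A$ and $a\models p$, put $q=tp(a/A)$, so $SU^d(q)<\infty$ by hypothesis. Among finite $B_0\subseteq B$ consider the ordinal values $SU^d(tp(a/AB_0))\le SU^d(q)$ and pick $B_0$ achieving the minimum. I claim $a\ind^d_{AB_0}B$: otherwise, by finite character of dividing some formula $\varphi(x,b)\in tp(a/B)$ with $b\in B$ divides over $AB_0$, and then $B_1=B_0\cup b$ makes $tp(a/AB_1)$ a dividing extension of $tp(a/AB_0)$ with strictly smaller $SU^d$, contradicting minimality.

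Finally, for parameter-independence: once (1)--(4) are known equivalent, combining the fact that (1) implies simplicity (from \cite{CardenasFarre2}) with the observation that for simple types $SU^d$ coincides with $SU^f$ and has Extension on $\langle p\rangle$, finiteness of $SU^d$ transfers between $S(A)$ and $S(A')$ for any $A\subseteq A'$ containing $dom(p)$ (the restriction direction uses heredity; the reverse direction uses Extension to find an $A'$-completion of the same rank), so condition (2), and hence also (1), does not depend on the base.
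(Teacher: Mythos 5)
There is a genuine gap, and it sits exactly at the step your whole cycle depends on. First, a point of comparison: the paper itself gives no proof of this proposition --- it is recalled from \cite{CardenasFarre2} and \cite{CardenasFarre1} --- so your argument has to stand alone. Your steps \emph{(4)}$\Rightarrow$\emph{(3)}, \emph{(3)}$\Rightarrow$\emph{(2)} and \emph{(2)}$\Rightarrow$\emph{(1)} are essentially fine: unfolding $SU^f(q)=\infty$ into an infinite forking chain is the standard foundation-rank argument, and the minimal-rank descent for \emph{(2)}$\Rightarrow$\emph{(1)} (finite character of dividing plus the dividing-rank property of $SU^d$) is exactly right. The problem is \emph{(1)}$\Rightarrow$\emph{(4)}, where you assert that over a simple type $\ind^d$ and $\ind^f$ coincide. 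That is not an available fact: this very paper treats it as open (``We do not have any example of dividing rank not being forking rank. If over a simple type always dividing equals forking, then every dividing rank would be a forking rank'', followed by the Question ``Every dividing rank is a forking rank?''). The reason it does not follow from simplicity of $p$ is that Kim's lemma, which gives forking $=$ dividing in simple theories, runs Morley-sequence arguments on the parameter side $tp(b/A)$, over which simplicity of $tp(a/A)$ gives no control. What your argument actually proves (base monotonicity of $\ind^d$ holds in any theory, so that part is fine) is only the dividing version of \emph{(4)}: some $i$ with $a\ind^d_{B_i}B_{i+1}$. Combined with your other steps this yields \emph{(1)}$\Leftrightarrow$\emph{(2)}, \emph{(3)}$\Leftrightarrow$\emph{(4)} and \emph{(3)}$\Rightarrow$\emph{(2)}, but the bridge \emph{(2)}$\Rightarrow$\emph{(3)} --- from no infinite dividing chains to no infinite forking chains --- is precisely the hard content, which \cite{CardenasFarre2} obtains (via results like its Corollary 5.15, quoted later in this paper: every non-supersimple type has a dividing non-supersimple extension) using the Shelah-tree machinery of \cite{CardenasFarre1}. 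Note also that the naive repair via Lemma~\ref{ForkingEq} fails: if $a\nind^f_{B_i}B_{i+1}$ and you enlarge to an auxiliary set $C_i\supseteq B_{i+1}$ over which all extensions divide over $B_i$, the next forking instance $a\nind^f_{B_{i+1}}B_{i+2}$ need not persist over the larger base $C_i$, so the chains cannot be interleaved.

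The parameter-independence paragraph inherits the same defect: the ``observation'' that for simple types $SU^d$ coincides with $SU^f$ and has Extension on $\left<p\right>$ is again the unproved claim --- Corollary~\ref{SUdExt} runs in the opposite direction, taking Extension as hypothesis, and nothing in the paper gives Extension for $SU^d$ even on simple types. Moreover, only the upward direction of parameter-independence is easy (enlarging the base from $A$ to $A'\supseteq A$ follows from base monotonicity of $\ind^d$ alone); for the downward direction Heredity points the wrong way, since for $q\subseteq q'$ one has $SU^d(q')\leq SU^d(q)$, so finiteness of $SU^d$ on completions over $A'$ does not transfer to completions over the smaller set $A$ without further argument. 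So both the equivalence \emph{(2)}$\Leftrightarrow$\emph{(3)} and the base-independence claim genuinely require the imported results of \cite{CardenasFarre2}, and your proposal, as written, replaces them with an open problem.
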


	\begin{corollary}
	    A partial type $p$ is supersimple if and only if there is a forking rank ranking all types in $\left<p\right>$ if and only if there is a dividing rank ranking all types in $\left<p\right>$.
	\end{corollary}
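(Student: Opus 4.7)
The plan is to deduce the corollary directly from Proposition~\ref{EqsSupersimple} and the minimality properties of $SU^d$ and $SU^f$ recorded in the earlier remark. Proposition~\ref{EqsSupersimple} already tells us that supersimplicity of $p$ is equivalent to $SU^d(q)<\infty$ (resp.\ $SU^f(q)<\infty$) for every completion $q\in S(A)$ of $p$, where $A=dom(p)$. So only two things need to be checked: that $SU^d$ and $SU^f$ actually rank \emph{all} of $\left<p\right>$ (not just the completions over $A$) when $p$ is supersimple, and that the existence of any forking/dividing rank on $\left<p\right>$ forces $SU^f$/$SU^d$ to be finite everywhere on $\left<p\right>$.

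For the forward direction, assume $p$ is supersimple. Then $SU^f$ is a forking rank (and $SU^d$ a dividing rank) on every class of complete types, so only finiteness remains. Pick $r\in\left<p\right>$. By the minimality of $\left<p\right>$, any type in $\left<p\right>$ is (or extends) a conjugate of a completion of $p$; more precisely, there is an automorphism $f$ of $\mathfrak{C}$ such that $r\supseteq p^f$. Since supersimplicity is invariant under automorphism (it is stated purely in terms of $\ind^d$), $p^f$ is still supersimple, and the restriction $r\upharpoonright dom(p^f)$ is a completion of $p^f$. By Proposition~\ref{EqsSupersimple} applied to $p^f$, that completion has finite $SU^f$ (and finite $SU^d$), and by \emph{Heredity} $SU^f(r)\le SU^f(r\upharpoonright dom(p^f))<\infty$, and analogously for $SU^d$.

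For the converse, suppose $R$ is a forking rank on $\left<p\right>$ that ranks every type in $\left<p\right>$. By the remark stating that $SU^f$ is the lowest forking rank, $SU^f(q)\le R(q)<\infty$ for every completion $q\in S(A)$ of $p$, and Proposition~\ref{EqsSupersimple}(3) gives supersimplicity of $p$. The dividing case is identical, using that $SU^d$ is the lowest dividing rank and invoking Proposition~\ref{EqsSupersimple}(2).

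There is no real obstacle; the only subtlety is the book-keeping in the forward direction, where one must pass from the statement of Proposition~\ref{EqsSupersimple} (about completions of $p$ over $dom(p)$) to finiteness of the rank on the whole class $\left<p\right>$. This is absorbed by the combination of isomorphism-invariance of supersimplicity and \emph{Heredity} of the rank.
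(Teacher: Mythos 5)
Your proof is correct and takes essentially the same route as the paper's: the forward direction ranks everything via $SU^f$ (and $SU^d$) using Proposition~\ref{EqsSupersimple}, and the converse uses that $SU^f$ and $SU^d$ are the lowest forking and dividing ranks to reduce back to that proposition. The only difference is that you make explicit the book-keeping step from completions of $p$ over $dom(p)$ to all of $\left<p\right>$ via automorphism-invariance and Heredity, which the paper's two-line proof leaves implicit.
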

	\begin{proof}
		If $p$ is supersimple, all its completions are ranked by $SU^f$.
		If there is a dividing rank ranking all completions of $p$, as $SU^d$ is the lowest dividing rank, $SU^d$ ranks all completions of $p$, so $p$ is supersimple.
	\end{proof}

    In particular, a complete type is supersimple if and only if it is ranked by some forking (dividing) rank.
    Also, a theory is supersimple if and only if all complete types are ranked by some forking (dividing) rank.

    We end this section with some open questions.
    We do not have any example of dividing rank not being forking rank. 
    If over a simple type always dividing equals forking, then every dividing rank would be a forking rank.

   \begin{question}
	  Every dividing rank is a forking rank?

      Two weakenings: A rank with Strong bounded multiplicity is a Lascar splitting rank?
      A rank all whose ranked types are stable and has Weak bounded multiplicity has Strong Bounded Multiplicity?
    \end{question}

    We know that Weak bounded multiplicity does not imply Bounded Multiplicity or Strong bounded multiplicity, 
      but another open question is if Bounded multiplicity implies Strong Bounded Multiplicity or Weak Bounded Multiplicity.
    \begin{question}
	  Bounded multiplicity implies Strong bounded multiplicity?
	  
	  A weakening: Bounded multiplicity implies Weak bounded multiplicity?
    \end{question}

    We know that under Bounded Multiplicity, Extension implies the reciprocal of the forking rank property, 
      but the reciprocal of the forking rank property implies Extension under a weaker condition.
    So, it is interesting to weaken the condition of Bounded multiplicity.
    \begin{question}
	  A rank all whose ranked types are simple and with Extension has the reciprocal of the forking rank property?
	  
	  A weakening: A forking rank with Extension has the reciprocal of the forking rank property?
    \end{question}

\section{Stable and Superstable types}
    We start this section  by characterizing  stable and superstable types in terms of the Lascar splitting relation
      in a similar way that simple and supersimple types are characterized by dividing and forking.

    There are a lot of equivalent statements that characterize the notion of a stable type. 
    We are going to use the following as a definition (see for example Casanovas\cite{Casanovas11b}).
    \begin{definition}\label{def: supersimple}
       A partial type $p$ over $A$ is stable if and only if for some cardinal $\lambda$ and for every $B\supseteq A$
         with $|B|\leq\lambda$, there are at most $\lambda$ types in $S(B)$ extending $p$.
    \end{definition}

    \begin{proposition}\label{EqStable}
        Let $p(x)$ be a partial type over $A$. The following are equivalent:
        \begin{enumerate}
            \item $p$ is stable.
            \item For every $a$ realizing $p$ and every chain $(B_i \mid i\in|T|^+)$ extending $A$ there is some
              $i\in|T|^+$ such that $a\ind^i_{B_i}{B_{i+1}}$.
            \item  For every $a$ realizing $p$ and every $(|A|+|T|)^+$-saturated model $M\supseteq A$, there is some
              $C\subseteq M$, $|C|\le |T|$ and $a\ind^i_{AC}M$.
            \item  For every $a$ realizing $p$ and every $B\supseteq A$, there is some $C\subseteq B$, 
              such that $|C|\leq |T|$ and $a\ind^i_{AC}B$.
        \end{enumerate}
    \end{proposition}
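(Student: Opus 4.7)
The plan is to establish the cycle $1 \Rightarrow 4 \Rightarrow 3 \Rightarrow 2 \Rightarrow 4 \Rightarrow 1$. The implication $4 \Rightarrow 3$ is immediate (take $B = M$), and $3 \Rightarrow 2$ goes by contrapositive: given a chain $(B_i : i < |T|^+)$ with $a \nind^i_{B_i} B_{i+1}$ for every $i$, take a $(|A|+|T|)^+$-saturated $M \supseteq \bigcup_i B_i$ and let condition 3 produce $C \subseteq M$ with $|C| \le |T|$ and $a \ind^i_{AC} M$. By regularity of $|T|^+$, $C \subseteq B_{i^*}$ for some $i^*$, and the global extension witnessing non-Lascar-splitting over $AC$ remains non-Lascar-splitting over the larger base $B_{i^*}$ (since $\stackrel{Ls}{\equiv}_{B_{i^*}}$ refines $\stackrel{Ls}{\equiv}_{AC}$), contradicting $a \nind^i_{B_{i^*}} B_{i^*+1}$.

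For $1 \Rightarrow 4$, stability of $p$ descends to $tp(a/B)$, and I invoke the standard fact that a stable type does not fork over a subset of its domain of size at most $|T|$; the resulting $C \subseteq B$ yields $a \ind^f_{AC} B$, which Fact \ref{LSFacts}(3) upgrades to $a \ind^i_{AC} B$ via NIP. For $4 \Rightarrow 1$, a type count: each $tp(a/B)$ is determined by a pair $(C, \bar{q})$ where $C \subseteq B$ has $|C| \le |T|$ and $\bar{q}$ is a global extension non-Lascar-splitting over $AC$; there are at most $|B|^{|T|}$ choices of $C$ and, by Fact \ref{LSFacts}(1), at most $2^{2^{|T|+|A|}}$ such $\bar{q}$, so taking $\lambda = 2^{2^{|T|+|A|}}$ gives $\lambda^{|T|} = \lambda$ and $|S_p(B)| \le \lambda$ whenever $|B| \le \lambda$, proving $p$ is $\lambda$-stable.

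The remaining implication $2 \Rightarrow 4$ is by contrapositive and is where the main difficulty lies. Given $a \models p$ and $B \supseteq A$ witnessing failure of 4 (so that for every small $C \subseteq B$, every global extension of $tp(a/B)$ Lascar-splits over $AC$), I apply the extension lemma for non-Lascar-splitting types---any partial type not Lascar-splitting over $D$ extends to a complete global type with the same property, proved by a Zorn's lemma argument on maximal non-Lascar-splitting extensions---to conclude that $tp(a/B)$ itself Lascar-splits over $AC$ with witnesses already in $B$. Then I build $(B_i : i < |T|^+)$ inside $B$: set $B_0 = A$; at stage $i+1$, use $B_i \setminus A$ as $C$ to extract witnesses $c, c' \in B$ of Lascar-splitting of $tp(a/B)$ over $B_i$ and set $B_{i+1} = B_i \cup \{c, c'\}$; take unions at limits. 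The cardinality stays $\le |A|+|T|$ throughout, and at each stage $tp(a/B_{i+1})$ contains $\varphi(x,c) \wedge \neg\varphi(x,c')$ with $c \stackrel{Ls}{\equiv}_{B_i} c'$, so every global extension Lascar-splits over $B_i$ and $a \nind^i_{B_i} B_{i+1}$, violating 2. The chief subtlety is the extension lemma together with the cardinality bookkeeping that keeps $|B_i| \le |A|+|T|$ for every $i < |T|^+$.
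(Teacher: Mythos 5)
Your implications \emph{1$\Rightarrow$4}, \emph{4$\Rightarrow$3} and \emph{4$\Rightarrow$1} are fine (the last is a counting argument of exactly the same kind as the paper's \emph{3$\Rightarrow$1}, and the bound $\lambda=2^{2^{|A|+|T|}}$ works). But the cycle breaks at the two remaining links. In \emph{3$\Rightarrow$2}, the set $C$ produced by statement \emph{3} is merely some subset of $M$; nothing forces $C\subseteq\bigcup_{i}B_i$, so ``by regularity of $|T|^+$, $C\subseteq B_{i^*}$'' is unjustified. Since $\ind^i$ has no base monotonicity, you cannot pass from $a\ind^i_{AC}M$ to $a\ind^i_{B_{i^*}}B_{i^*+1}$ unless $AC\subseteq B_{i^*}$, and there is no way to steer the $C$ handed to you by \emph{3} into the chain.

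The deeper problem is the ``extension lemma'' underpinning your \emph{2$\Rightarrow$4}: non-Lascar-splitting does \emph{not} have the extension property over arbitrary sets, and the Zorn argument does not prove it. A maximal non-Lascar-splitting partial type need not be complete: $q$ may entail $\varphi(x,c)\leftrightarrow\neg\varphi(x,c')$ for some pair $c\stackrel{Ls}{\equiv}_{D}c'$ without itself containing a splitting pair, so that either way of deciding $\varphi(x,c)$ forces a splitting pair among instances already decided, and Zorn's maximal element gets stuck below a complete type. Hence $a\nind^i_{B_i}B$ (all \emph{global} extensions Lascar-split over $B_i$) does not yield witnesses $c,c'\in B$ --- an arbitrary $B$ may simply contain no pair of $B_i$-Lascar-equivalent tuples distinguished by $a$ --- and your chain construction inside $B$ collapses. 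This is precisely why the paper orients the chain argument the other way: it proves \emph{2$\Rightarrow$3} by contraposition, building the chain $C_{i+1}=C_imm'$ \emph{inside the $(|A|+|T|)^+$-saturated model $M$} of statement \emph{3}, where saturation guarantees that $a\nind^i_{AC_i}M$ does produce witnesses $m,m'\in M$ (every Lascar strong type over the small set $AC_i$ is represented in $M$, so if $tp(a/M)$ showed no witnesses in $M$ one could define a global non-Lascar-splitting extension from it). To repair your proof, keep \emph{1$\Rightarrow$4}, \emph{4$\Rightarrow$3}, \emph{4$\Rightarrow$1}, replace the two faulty links by the paper's \emph{1$\Rightarrow$2} (simplicity gives the $\ind^f$ version of \emph{2} by local character along $|T|^+$-chains, and NIP converts $\ind^f$ to $\ind^i$, as in your \emph{1$\Rightarrow$4}) and its saturated-model \emph{2$\Rightarrow$3}.
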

    \begin{proof}
        \emph{1$\Rightarrow$2}. As $p$ is simple, \emph{2} is true replacing $\ind^i$ by $\ind^f$. 
          As $p$ is NIP, $a\ind^i_{B_i}{B_{i+1}}$ is equivalent to $a\ind^f_{B_i}{B_{i+1}}$.
        
        \emph{2$\Rightarrow$3}. By contraposition, assume there is some realisation $a$ of $p$ and some
            $(|A|+|T|)^+$-saturated model $M\supseteq A$ such that for every $C\subseteq M$ with $|C|\leq|T|$,
            $a\nind^i_{AC}M$. 
          We build $(C_i\mid i\in|T|^+)$ a chain of subsets of $M$ starting with $C_0=\emptyset$, $|C_i|\le |T|$ and
            $a\nind^i_{AC_i}{AC_{i+1}}$. Obviously $B_i=AC_i$ contradicts \emph{2}. 
          Since $a\nind^i_{AC_i}{M}$ there are some $m,m'\in M$ and $\varphi(x,y)\in L$ such that
            $m\stackrel{Ls}{\equiv}_{AC_i}m'$, $\models\varphi(a,m)$ and $\not\models\varphi(a,m')$. 
          It suffices to pick $C_{i+1}=C_imm'$.

        \emph{3$\Rightarrow$1}. Taking $\lambda=2^{2^{|A|+|T|}}$, we show that for every $B\supseteq A$ with 
            $|B|\leq\lambda$, $p$ has at most $\lambda$ extensions to complete types over $B$. 
        Assuming $B=M$ is an $(|A|+|T|)^+$-saturated model, this follow from \emph{3} since there are at most
            $\lambda$ complete types over sets of the form $AC$ with $C\subseteq M$ with $|C|\le|T|$ and each such
            type has at most $\lambda$ non-Lascar splitting extensions to $M$. 

        \emph{1$\Rightarrow$4}. As $p$ is simple, \emph{4} is true replacing $\ind^i$ by $\ind^f$. 
          As $p$ is NIP, $a\ind^i_{AB_0}B$ is equivalent to $a\ind^f_{AB_0}B$.

        \emph{4$\Rightarrow$3}. Immediate.
    \end{proof}

    We remember that Poizat\cite{Poizat} defines a superstable complete type as a type ranked by $U$.
    We are going to generalize this notion to a partial type in a similar way to supersimple:
    \begin{definition}\label{def: stable} 
		Let $p(x)$ be a partial type over $A$.
		$p$ is \textbf{superstable} if and only if for every $a$ realizing $p$ and every $B\supseteq A$, 
		there is a finite set $B_0\subseteq B$ with $a\ind^i_{AB_0}B$
    \end{definition}

    So, now we are going to check that our definition of superstable is consistent with the previous definition for
      complete types and with the definition of stable and supersimple types.
    \begin{proposition}
         Let $p(x)$ be a partial type over $A$. The following are equivalent:
        \begin{enumerate}
            \item $p$ is superstable.
            \item $p$ is stable and supersimple.
            \item All the completions of $p(x)$ are ranked by $U$.
            \item For every $a$ realizing $p$ and every chain $(B_i:i\in \omega)$ extending $A$ there is some $i\in\omega$ such that $a\ind^i_{B_i}{B_{i+1}}$.
            \item There is a Lascar-splitting rank ranking all completions of $p$.
            \item There is a rank with Bounded multiplicity ranking all completions of $p$.
            \item For every $B\supseteq A$ with  $|B|\geq 2^{2^{|A|+|T|}}$, $p$ has at most $|B|$ extensions to complete types over $B$.
            \item There is some $\lambda$ such that for every $B\supseteq A$ with  $|B|\geq \lambda$, $p$ has at most $|B|$ extensions to complete types over $B$.
        \end{enumerate}
    \end{proposition}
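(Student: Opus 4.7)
The plan is to pivot on condition 3 (every completion of $p$ is ranked by $U$) and tie each of the other seven conditions to it, leaning on the identification $U=SU^i$, Lemma~\ref{UleqBM}, Corollary~\ref{relation_SU_U}, Fact~\ref{LSFacts}, and the earlier characterisations of stable types in Proposition~\ref{EqStable} and of supersimple types in Proposition~\ref{EqsSupersimple}. The work then splits into four thematic blocks that each reduce to condition 3.

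For the rank-theoretic block 3 $\Leftrightarrow$ 5 $\Leftrightarrow$ 6, the rank $U=SU^i$ is itself a Lascar-splitting rank and, by the equivalences in the previous section, has Strong Bounded Multiplicity and hence Bounded Multiplicity, giving 3 $\Rightarrow$ 5 $\Rightarrow$ 6; the converses follow from the minimality of $U$ as a Lascar-splitting rank (since $U=SU^i$) and as a rank with Bounded Multiplicity (Lemma~\ref{UleqBM}). For the chain block 3 $\Leftrightarrow$ 4 $\Leftrightarrow$ 1, condition 4 is just the foundation-rank reading of $SU^i$ on every completion of $p$, and the equivalence with condition 1 is the usual finite-character argument for $\ind^i$, analogous to the corresponding step in Proposition~\ref{EqsSupersimple}: using base monotonicity of $\ind^i$ one turns the finite witness $B_0$ for an arbitrary $B \supseteq A$ into stabilisation of any chain, and conversely extracts a non-stabilising chain whenever no finite $B_0$ suffices.

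The stability link 1 $\Leftrightarrow$ 2 is direct: superstable $\Rightarrow$ supersimple since $\ind^i$ refines $\ind^f$ by Fact~\ref{LSFacts}.2, and superstable $\Rightarrow$ stable by feeding the finite witness $B_0$ as the set $C$ in clause 4 of Proposition~\ref{EqStable}; conversely, a stable type is NIP, so by Fact~\ref{LSFacts}.3 the relations $\ind^f$ and $\ind^i$ coincide on the completions of $p$, converting the finite supersimple witness into the required superstable one. For the counting block 2 $\Leftrightarrow$ 7 $\Leftrightarrow$ 8, the implication 7 $\Rightarrow$ 8 is trivial, 8 $\Rightarrow$ stability follows by padding small sets $B$ up to size $\lambda$ in Definition~\ref{def: supersimple}, and for 2 $\Rightarrow$ 7 I would combine clause 4 of Proposition~\ref{EqStable} (giving for each extension of $p$ to $B$ a small $C \subseteq B$, $|C|\leq |T|$, over which Lascar-splitting fails) with supersimplicity to refine $C$ to a finite set, then bound $|S(B)/p|$ by the number of finite subsets of $B$ times the fixed bound from Fact~\ref{LSFacts}.1 on non-Lascar-splitting extensions over each base.

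The main obstacle is the remaining direction 8 $\Rightarrow$ 2 (equivalently 8 $\Rightarrow$ 1), which is the classical superstable counting theorem: if some completion of $p$ had infinite $SU^f$-rank, one must explode an infinite forking chain into a binary tree of forking extensions producing more than $|B|$ distinct extensions over a sufficiently large $B$, contradicting hypothesis 8. All the other implications reduce cleanly to the previously established results via $U=SU^i$, Fact~\ref{LSFacts}, Lemma~\ref{UleqBM}, Corollary~\ref{relation_SU_U}, Proposition~\ref{EqStable} and Proposition~\ref{EqsSupersimple}, so the genuine combinatorial content is concentrated in this last implication and in the careful bookkeeping needed for the counting in 2 $\Rightarrow$ 7.
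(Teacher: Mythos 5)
Most of your decomposition is sound and close to the paper's: the rank block $3\Leftrightarrow5\Leftrightarrow6$ is exactly the paper's argument ($U$ is a Lascar-splitting rank; Lascar-splitting rank implies Strong, hence ordinary, Bounded multiplicity; and Lemma~\ref{UleqBM} gives minimality), the link $1\Leftrightarrow2$ via Fact~\ref{LSFacts} matches the paper, and $1\Rightarrow7\Rightarrow8$ with finite bases plus the bound $2^{2^{|A|+|T|}}$ from Fact~\ref{LSFacts}.1 is the paper's counting verbatim. Your chain block is a genuine variant: the paper routes $4$ through $2$ using Propositions~\ref{EqStable} and~\ref{EqsSupersimple}, whereas you propose $3\Leftrightarrow4\Leftrightarrow1$ directly via a foundation-rank reading of $SU^i$ and finite character of $\ind^i$. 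That route can be made to work, but note it uses two facts you would have to prove and the paper never states: base monotonicity of $\ind^i$ (non-Lascar-splitting over $C$ implies non-Lascar-splitting over $D\supseteq C$, since $Autf(\mathfrak{C}/D)\subseteq Autf(\mathfrak{C}/C)$) and finite character on the right ($a\nind^i_C B$ implies $a\nind^i_C CD$ for some finite $D\subseteq B$, by compactness, using that the global types non-Lascar-splitting over $C$ form a closed set).

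The genuine gap is in $8\Rightarrow2$, and your sketch there would fail as stated, in two ways. First, the cardinality count: a \emph{binary} tree of height $\omega$ produces at most $2^{\aleph_0}$ branches over a parameter set of size about $|A|+|T|+\aleph_0$; since condition $8$ only posits \emph{some} $\lambda$, which may well satisfy $\lambda\geq2^{\aleph_0}$, this yields no contradiction after padding $B$ up to size $\lambda$. The paper instead fixes $\kappa\geq\lambda+|A|$ with $\kappa^{\aleph_0}>\kappa$ and builds a $\kappa$-branching tree $(p_s:s\in\kappa^{<\omega})$ inside a $\kappa^+$-saturated, strongly $\kappa^+$-homogeneous model, getting $\kappa^{\aleph_0}>\kappa$ extensions over a set of size $\leq\kappa$. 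Second, branching itself: an infinite forking chain, or $SU^f=\infty$, gives you \emph{one} forking extension of infinite rank at each step, but not $\kappa$ pairwise distinct extensions that are again non-supersimple, which is what iterating requires. The paper obtains this from an external ingredient your plan omits entirely: Corollary 5.15 of C\'ardenas--Farr\'e, which provides a non-supersimple extension that \emph{divides} over the base; dividing (not forking) is what yields, via a $k$-inconsistent family of conjugates $\varphi(x,b_i)$ and automorphisms $f_i$ over the base moving $b$ to $b_i$, the $\kappa$ many distinct non-supersimple conjugate extensions at each node. A forking extension cannot be conjugated into many distinct ones in this way without first passing to dividing while preserving non-supersimplicity, and that preservation is precisely the nontrivial content the cited corollary supplies. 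So you correctly located where the combinatorial weight of the proposition sits, but the "forking chain to binary tree" plan needs to be replaced by the $\kappa$-ary, dividing-based construction to actually contradict $8$.
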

    \begin{proof} 
        \emph{1$\Rightarrow$2} If $p$ is superstable then it is immediate that it is stable and therefore, by
            \ref{LSFacts}, $\ind^i=\ind^f$ over $p$, so substituting $\ind^i$ in the definition of superstable by
            $\ind^f$ we obtain supersimple.

    	\emph{2$\Rightarrow$3} As all completions $q$ of $p$ are stable and supersimple, by
    	    Corollary~\ref{relation_SU_U}, $U(q)=SU^f(p)<\infty$.

    	\emph{3$\Rightarrow$1} As $SU^f\leq U$, all completions of $p$ are ranked by $SU^f$ and therefore $p$ is supersimple.
    	  As all completions are superstable in the sense of Poizat, all completions are stable and therefore $p$ is stable. 
    	  Again, $\ind^i=\ind^f$ over $p$, so replacing $\ind^f$ in the definition of supersimple by
            $\ind^i$ we obtain superstable.
    		  
    	\emph{2$\Rightarrow$4} As $p$ is supersimple, by proposition~\ref{EqsSupersimple}, \emph{4} is true replacing $\ind^i$ by $\ind^f$. 
    	  As $p$ is stable, $a\ind^i_{B_i}{B_{i+1}}$ is equivalent to $a\ind^f_{B_i}{B_{i+1}}$.
    	
    	\emph{4$\Rightarrow$2} By proposition~\ref{EqStable}, $p$ is stable. 
    	    So we can replace $\ind^i$ by $\ind^f$ and therefore $p$ is supersimple.
    	
    	\emph{3$\Rightarrow$5} $U$ is a Lascar-splitting rank.
    	
    	\emph{5$\Rightarrow$6} Immediate.
		
		\emph{6$\Rightarrow$3} $U$ is the lowest rank with Bounded multiplicity. 
		
	   	\emph{1$\Rightarrow$7} Each extension is non Lascar-Splitting over $AB_0$ for some finite $B_0\subseteq B$. There are $|B|$ many finite subsets and each type over $AB_0$ has at most $2^{2^{|A|+|T|}}$ non-lascar splitting extensions.

	   	\emph{7$\Rightarrow$8} is obvious.      

	   	\emph{8$\Rightarrow$2} If $p$ is not stable then, by Definition~\ref{def: stable} \emph{2} fails. Assume now $p$ is not supersimple and let $\lambda$ as in \emph{8} is search of a contradiction. Let $\kappa$ be a cardinal $\kappa\ge \lambda+|A|$ with $\kappa^{\aleph_0}>\kappa$ and let $M$ be a $\kappa^+$-saturated and strongly $\kappa^+$-homogeneous model containing $A$. We first observe the following:
	   	\begin{fact}
	   	Any non-supersimple partial type $p'$ over $B\subseteq M$ with $|B|\le\kappa$ has $\kappa$ many  extensions to complete non-supersimple types over $M$. 
	   	\end{fact}
	   	\begin{proof}
	   	   By Corollary~\emph{5.15} of C\'ardenas, Farr\'e\cite{CardenasFarre2}, $p'$ has an extension to some $q\in S(M)$ which is not supersimple and divides over $B$. 
	   	   Since some $\varphi(x,b)\in q$ divides over $B$, by saturation we can find $(b_i:i\in\kappa)$ in $M$ with $b_i\equiv_Bb$ and $\{\varphi(x,b_i):i\in\kappa\}$ $k$-inconsistent for some $k\geq2$. 
	   	   Picking $(f_i\mid i\in\kappa)$ automorphisms of $M$ over $B$ with $f_ib=b_i$, $(q^{f_i}:i\in\kappa)$ contains $\kappa$ many non-supersimple types.
        \end{proof}  
         By the claim, let $(q_i\mid i\in\kappa)$ be non-supersimple extensions of $p$ to $M$. We can pick some $B\supseteq A$ of cardinal at most $\kappa$ such that $(p_i=q_i\upharpoonright B: i\in\kappa)$ are all different. As each $p_i$ is not supersimple we can find again a family $(p_{ij}\mid j\in\kappa)$ of different complete non-supersimple extensions of $p_i$ to some set of parameters of cardinal $\le\kappa$. Iterating the process one builds a tree of complete types $(p_{s}:\in\kappa^{<\omega})$ each one over a set of cardinal $\le\kappa$ extending $p$. The set of all parameters in the tree has cardinal $\le\kappa$ while there are $\kappa^{\aleph_0}$ branches. Therefore $p$ has $\kappa^{\aleph_0}$ extension to a set of cardinal $\kappa$.    
    \end{proof}


\begin{thebibliography}{99}
	\bibitem{Adler}			  H. Adler. Introduction to theories without the independence property. Preprint, 2007.
	\bibitem{CardenasFarre1}  S. C\'ardenas and R. Farr\'e. A rank based on Shelah trees. Preprint. 	
	                            arXiv:1912.12279 [math.LO], 2019
	\bibitem{CardenasFarre2}  S. C\'ardenas and R. Farr\'e. Foundation ranks and supersimplicity. Preprint.
	                            arXiv:2005.00520 [math.LO], 2020
	\bibitem{Casanovas10}     E. Casanovas. NIP formulas and theories. 
	                            Lecture Notes for the Model Theory Seminar 2009-10, September 2010.
	\bibitem{Casanovas11b}    E. Casanovas. More on NIP and related topics. 
	                            Lecture Notes for the Model Theory Seminar 2010-11, September 2011.
    \bibitem{Chernikov}		  A. Chernikov. Theories without the tree property of the second kind. 
	                            Annals of Pure and Applied Logic, 165:695-723, 2014.
	\bibitem{HKP}             B. Hart, B. Kim and A. Pillay. Coordinatisation and Canonical Bases in Simple
	                            Theories. The Journal of Symbolic Logic, 65:293-309, 2000.
	\bibitem{Poizat}		  B. Poizat. A Course in Model Theory. Springer, 2000. Original version in French: 1985.
\end{thebibliography}
\end{document}